\author{Radhika Vasisht $^{1,\dag}$ and Ruchi Das $^2$}
\title{Induced Dynamics in Hyperspaces of Non-Autonomous Discrete Systems} 
\theoremstyle{definition}
\newtheorem{defn}{Definition}[section]
\providecommand{\keywords}[1]{\textbf{Keywords :} #1}
\providecommand{\msc}[1]{\textbf{MSC(2010)} #1}
\theoremstyle{plain}
\newtheorem{thm}{Theorem}[section]
\newtheorem{Cor}{Corollary}[section]
\newtheorem{exm}{Example}[section]
\newtheorem{lem}{Lemma}[section]
\begin{document}
\date{}
\maketitle

\begin{abstract}
In this paper, the interrelations of some dynamical properties of the non-autonomous dynamical system $(X,f_{1,\infty})$ and its induced non-autonomous dynamical system $(\mathcal{K}(X),\overline{f_{1,\infty}})$ are studied, where $\mathcal{K}(X)$ is the hyperspace of all non-empty compact subsets of $X$, endowed with Vietoris topology. Various stronger forms of sensitivity and transitivity are considered. Some examples of non-autonomous systems are provided to support the results. A relation between shadowing property of the non-autonomous system $(X,f_{1,\infty})$ and its induced system $(\mathcal{K}(X),\overline{f_{1,\infty}})$ is studied.
\\
\keywords{Non-autonomous dynamical systems, hyperspace, sensitivity, shadowing property}
\\ 
\msc{Primary 54H20; Secondary 37B55, 54B20}
\end{abstract}
\renewcommand{\thefootnote}{\fnsymbol{footnote}}
\footnotetext{\hspace*{-5mm}
\renewcommand{\arraystretch}{1}
\begin{tabular}{@{}r@{}p{15cm}@{}}
$^\dag$& the corresponding author. Email address: radhika.vasisht92@gmail.com (R. Vasisht)\\
$^1$&Department of Mathematics, University of Delhi, Delhi-110007, India\\
$^2$&Department of Mathematics, University of Delhi, Delhi-110007, India\\

\end{tabular}}
\section{Introduction}
Topological Dynamical System is one of the most applicable branches of mathematics devoted to the study of systems that are governed by uniform set of laws over time such as difference and differential equations. An autonomous discrete dynamical system is a dynamical system which has no external input and always evolves according to the same unchanging law. Most of the natural phenomenons are subjected to time-dependent external forces and their modeling leads to a mathematical theory of what are called non-autonomous discrete dynamical systems. The theory of non-autonomous dynamical systems helps characterizing the behaviour of various natural phenomenons which cannot be modeled by autonomous systems. The mathematical theory of non-autonomous systems is considerably more involved than the theory of autonomous systems. Non-autonomous discrete dynamical systems were introduced by authors in \cite{MR1402417}. Over recent years, the theory of such systems has developed into a highly active field related to, yet recognizably distinct from that of classical autonomous dynamical systems \cite{MR3779023, MR3528201, MR3516121, MR3717471, MR2922209, MR2996593}. Most of the natural phenomenon arise as a collection of several individual components and thus set valued dynamics are of great importance for studying any of these phenomenon. There are many applications of this approach in different branches of Science. Thus, there was a strong need to study the dynamical behaviour of induced spaces. Many researchers have worked in this direction. However, most of the study has been done when the system evolves according to the same unchanging law, but this approach fails to analyse the dynamics of the system governed by  the rules that change with time. So, the study of induced systems for non-autonomous dynamical systems is of utmost importance \cite{MR3584171}.
We first introduce some notations.
Consider the following non-autonomous discrete dynamical system (N D S) $(X,f_{1,\infty})$:
\begin{equation} x_{n+1}=f_n(x_n) , n \geq 1\nonumber \end{equation}
where $(X,d)$ is a compact metric space and $f_n: X \rightarrow X$ is a continuous map. For convenience, denote $f_{1,\infty} = ({f_n})_{n =1}^\infty $. Naturally, a difference equation of the form $x_{n+1}=f_n(x_n)$ can be thought of as the discrete analogue of a non-autonomous differential equation $\frac{dx}{dt}=f(x,t)$. 

Sensitive dependence on initial conditions or simply sensitivity, also known as the butterfly effect, is the main ingredient of chaos \cite{MR1963683}. In a system exhibiting sensitivity, a small change in the initial conditions will lead to a significant change in the dynamics of the system. Sensitivity analysis has a major application in the area of population biology \cite{MR2944184}. For continuous self maps of compact metric spaces, Moothathu \cite{MR2351026} gave an insight of the stronger forms of sensitivity and transitivity based on the largeness of subsets of $\mathbb{N}$. Since then several other stronger forms of both sensitivity and transitivity have been studied by different researchers. In \cite{MR2922208}, the author studies the relations between various forms of both sensitivity and transitivity  of the systems $(X,f)$ and $(\mathcal{K}(X),\overline{f})$, where $\mathcal{K}(X)$ denotes the hyperspace of all non-empty compact subsets of X. In \cite{MR3339062,MR3070943}, authors have studied various forms of sensitivity for product maps. Another important property in the computation of dynamical systems is the concept of shadowing \cite{MR2880453}. For a map $f$, $\delta$-pseudo-orbit is sequence (finite or infinite) of points such that the distance between $f(x_i)$ and $(x_{i+1})$ is less than $\delta$. A $\delta$-pseudo-orbit is said to be $\epsilon$-traced if there is a real point whose iterates track the pseudo-orbit within a distance of $\epsilon$,i.e, the pseudo-orbit is uniformly approximated by a genuine orbit. A map is said to have shadowing property if every $\delta$-pseudo orbit is $\epsilon$-traced. Shadowing has various applications in numerical analysis \cite{MR1858802}. In \cite{MR3570213}, authors have studied the relation between the shadowing property of the system $(X,f)$ and its induced hyperspace.
\\Motivated by the work discussed above for the induced systems of autonomous dynamical systems, we study such relations for non-autonomous systems. In Section 2, we give the preliminaries required for the remaining sections. In Section 3, we study the relations among various stronger forms of both sensitivity and transitivity for the non-autonomous system $(X,f_{1,\infty})$ and its induced systems $(\mathcal{K}(X),\overline{f_{1,\infty}})$. We also study various stronger forms of sensitivity for product maps. Further we give examples justifying our results. In Section 4, we establish a relation between the shadowing property of the non-autonomous system $(X,f_{1,\infty})$ and its induced system $(\mathcal{K}(X),\overline{f_{1,\infty}})$.
\section{Preliminaries}
In this section we recall some well known notions.

For any two open sets U and V of $X$, denote, 
$N_{f_{1,\infty}}(U,V)= \{n\in \mathbb{N} : f_1^n(U)\cap V\ne \emptyset \}$.
Let $ V\subset X$ be a non-empty open subset, $\mathbb{N}$ be the set of positive integers and $\delta>0$ . Denote 
$N_{f_{1,\infty}}(V,\delta) = \{n\in \mathbb{N}$ such that, there exist   $x,y \in V$ with $ d(f_1^n(x),f_1^n(y))> \delta \}$.

\begin{defn} A set $F\subset\mathbb{N}$ is called $\textit{syndetic}$ if there exists a positive integer a such that $\{i,i+1,.......,i+a\}\cap F\ne\emptyset$, for every $i\in\mathbb{N}$.
\end{defn}

\begin{defn}
A $\textit{thick set}$ is a set of integers that contains arbitrarily long intervals, that is, given a thick set $T$, for every $p\in \mathbb{N}$, there is some $n\in \mathbb{N}$  such that $\{n,n+1,n+2,...,n+p\}\subset T $.
\end{defn}

\begin{defn}
A set $F\subset\mathbb{N}$ is called $\textit{thickly syndetic}$ if $\{n\in\mathbb{N}: n+j\in F, 0\leq j\leq k\}$ is syndetic for each $k\in\mathbb{N}$. Then taking $n=a$ in the definition of syndetic set, we get that every thickly syndetic subset of $\mathbb{N}$ is syndetic.
\end{defn}

\begin{defn}
Let $|N_{f_{1,\infty}}(U,V)|$ be the cardinal number of the set $N_{f_{1,\infty}}(U,V)$ . Then 
\[\limsup_{n \to \infty} \frac{|N_{f_{1,\infty}}(U,V)\cap N_n|}{n}\] is called the upper density of $N_{f_{1,\infty}}(U,V)$, where $N_n=\{0,1,2,...,{n-1}\}$.

\end{defn}

\begin{defn}
The system $(X,f_{1,\infty})$ is said to be $\textit{topologically transitive}$ if for any two non-empty open sets $U_0$ and $V_0$ in $X$, there exists a positive integer $n\in \mathbb{N}$ such that, $U_n\cap V_0 \ne \emptyset$,
where $U_{i+1}=f_i(U_i)$, for every i, $ 1\leq i \leq n$ i.e. $f_{n}of_{n-1}o\cdots of_1(U_0)\cap V_0 \ne \emptyset$. Thus, system $(X,f_{1,\infty})$ is topologically transitive if for any two non-empty open sets $U_0$ and $V_0$ of $X$, $N_{f_{1,\infty}}(U_0,V_0)$ is non-empty.
\end{defn}

\begin{defn}
The system $(X,f_{1,\infty})$ is said to be $\textit{topologically mixing}$ if for any two non-empty open sets $U_0$ and $V_0$ in $X$, there exists a positive integer $N\in \mathbb{N}$ such that for any $n\geq N$, $U_n\cap V_0 \ne \emptyset$,
where $U_{i+1}=f_i(U_i)$, for every i, $ 1\leq i \leq n$ i.e. $f_{n}of_{n-1}o\cdots of_1(U_0)\cap V_0 \ne \emptyset $ ,for all $ n\geq N $. Thus, system $(X,f_{1,\infty})$ is topologically mixing if for any two non-empty open sets $U_0$ and $V_0$ of $X$ , there is a positive integer N such that $N_{f_{1,\infty}}(U_0,V_0) \supset [N,\infty)\cap\mathbb{N}$.
\end{defn}

\begin{defn}
The system $(X,f_{1,\infty})$ is said to be $\textit{syndetic transitive}$ if for any two non-empty open sets $U_0$ and $V_0$ in $X$, $N_{f_{1,\infty}}(U_0,V_0)$ is syndetic.
\end{defn}

\begin{defn}
The system $(X,f_{1,\infty})$ is said to be $\textit{topologically ergodic}$ if for any two non-empty open sets $U_0$ and $V_0$ in $X$, $N_{f_{1,\infty}}(U_0,V_0)$ has positive upper density.
\end{defn}

\begin{defn}
The system $(X,f_{1,\infty})$ is said to have $\textit {sensitive dependence on initial}$ $ \textit{conditions}$ if there exists a constant $ \delta_0>0$ such that for any $x_0 \in X$ and any neighbourhood U of $x_0$, there exists $y_0\in X\cap U$ and a positive integer n such that $d(x_n,y_n)>\delta_0$, where $\{x_i\}_{i=0}^\infty$ and  $\{y_i\}_{i=0}^\infty$ are the orbits of the system $(X,f_{1,\infty})$ starting from $x_0$ and $y_0$ respectively, the constant $\delta_0>0$ is called a sensitivity constant of the system $(X,f_{1,\infty})$.
Here  $({x_i})_{i=0}^\infty = \{x\in X$ such that $f_1^i(x) , i\geq 1\}$ where $f_1^i(x) = f_{i}\circ\cdots\circ f_1(x)$.
Then system $(X,f_{1,\infty})$ is said to have sensitive dependence on initial conditions or is $\textit{sensitive}$ in $X$ if there exists a constant $\delta>0$ such that for any non-empty open set V of $X$, $N_{f_{1,\infty}}(V,\delta)$ is non-empty.
\end{defn}

\begin{defn}
The system $(X,f_{1,\infty})$ is called $\textit{cofinitely sensitive}$ in $X$ if there exists a constant $\delta>0$ such that for any non-empty open set V of $X$, there exists $N\geq1$ such that $[N,\infty)\cap\mathbb{N} \subset N_{f_{1,\infty}}(V,\delta)$; $\delta$ is called a constant of cofinite sensitivity.
\end{defn}
  
\begin{defn}
The system $(X,f_{1,\infty})$ is said to have $\textit{syndetic sensitivity}$ in $X$ if there exists a constant $\delta>0$ such that for any non-empty open set V of $X$, $N_{f_{1,\infty}}(V,\delta)$ is syndetic; $\delta$ is called a constant of syndetic sensitivity.
\end{defn}

\begin{defn} 
The system $(X,f_{1,\infty})$ is said to be $\textit{thickly syndetic sensitive}$ in $X$ if there exists a constant $\delta>0$ such that for any non-empty open set V of $X$, $N_{f_{1,\infty}}(V,\delta)$ is thickly syndetic; $\delta$ is called a constant of thickly syndetic sensitivity \cite{MR3528201}.
\end{defn}

We have, \bigskip

cofinitely sensitive $\implies$ thickly syndetic sensitive $\implies$ syndetic sensitive $\implies$ sensitive.
\\The following notions (Definitions 2.13-2.15) have been defined by us in \cite{vasisht2018}.

\begin{defn}
The system $(X,f_{1,\infty})$ is said to have $\textit{thick sensitivity}$ in $X$ if there exists a constant $\delta>0$ such that for any non-empty open set V of $X$, $N_{f_{1,\infty}}(V,\delta)$ is thick; $\delta$ is called a constant of thick sensitivity.
\end{defn}

\begin{defn}
The system $(X,f_{1,\infty})$ is said to have $\textit{ergodic sensitivity}$ in $X$ if there exists a constant $\delta>0$ such that for any non-empty open set V of $X$, $N_{f_{1,\infty}}(V,\delta)$ has positive upper density; $\delta$ is called a constant of ergodic sensitivity.
\end{defn}

\begin{defn}
The system $(X,f_{1,\infty})$ is said to have $\textit{multi-sensitivity}$ in $X$ if there exists a constant $\delta>0$ such that for  every $k\geq 1$ and any non-empty open subsets $V_{1},V_{2},\ldots ,V_{k}$ of $X$, $\cap_{i=0}^k \mathbb{N}_{f_{1,\infty}}(V_{i},\delta)$ is non-empty; $\delta$ is called a constant of multi-sensitivity.
\end{defn}

\begin{defn}
A finite or infinite sequence $\{x_0, x_1, x_2, \ldots \} \subseteq X$, is a $\delta \textit{-pseudo orbit}$, for some $\delta>0$, if $d(f_i(x_{i-1}),x_i)<\delta$, for all $i\geq1$ \cite{MR3174280}.
\end{defn}

\begin{defn}
The system $(X,f_{1,\infty})$ is said to have $\textit{shadowing property}$ if for every $\epsilon>0$, there exists $\delta=\delta(\epsilon)>0$ such that for every $\delta$-pseudo orbit $\{x_0, x_1, x_2,\ldots\}\subseteq X$, there is a $y\in X$ such that for all $i\geq 0$, $d(f_{0}^{i}(y),x_i)<\epsilon$ \cite{MR3174280, MR3206430}.
\end{defn}

\begin{defn}
The system $(X,f_{1,\infty})$ is said to have $\textit{finite-shadowing property}$ if for every $\epsilon>0$, there exists $\delta=\delta(\epsilon)>0$ such that for every finite $\delta$-pseudo orbit $\{x_0, x_1, x_2,\ldots, x_n\}$ $\subseteq X$, there is a $y\in X$ such that for all $0\leq i\leq n$, $d(f_{0}^{i}(y),x_i)<\epsilon$.
\end{defn}

Let $X$ be a topological space. Then $\mathcal{K}$($X)$ denotes the hyperspace of all non-empty compact subsets of $X$ endowed with the \textit{Vietoris Topology}. A basis of open sets for Vietoris topology is given by following sets:
\bigskip

\noindent $< U_1, U_2, \ldots , U_k >$ = $\{K \in$ $\mathcal{K}$($X)$: $K \subset \bigcup_{i=1}^{k} U_{i}$ and $K\cap U_{i}$  $\ne \emptyset$, for each $i$ $\in \{1, 2, \ldots ,k\}$\},

\bigskip\noindent where $U_1, U_2, \ldots ,U_k$ are non-empty open subsets of $X$.

Given metric space $(X, d)$, a point $x \in X$ and $A \in \mathcal{K}(X)$, let $d(x, A)$ = $\inf \{d(x, a): a \in A \}$. For every $\epsilon > 0$, let  open $d$-ball in $X$ about $A$ and radius $\epsilon$ be given by  $B_{d}(A, \epsilon) = \{x \in X: d(x, A) < \epsilon\} = \bigcup_{a \in A} B_d(a, \epsilon)$ where $B_d(a, \epsilon)$ denotes the open ball in $X$ centred at $a$ and of radius $\epsilon$. The Hausdorff metric on $\mathcal{K}$($X)$  induced by $d$, denoted by $d_H$, is defined  as follows:

\ \ \ \ \ \ \ \ \ \ \ \ \ \ \ \ \ $d_H$($A, B) = \inf \{ \epsilon > 0: A \subseteq B_{d}(B, \epsilon) \  \text{and} \  B \subseteq B_{d}(A, \epsilon)\}$,

\bigskip\noindent where $A$, $B \in$ $\mathcal{K}$($X)$. We shall recall that the topology induced by the Hausdorff metric coincides with the Vietoris topology if and only if the space $X$ is compact. Also, for a compact metric space $X$ and  $A, B \in$ $\mathcal{K}$($X)$, we get that $d_H$($A, B) < \epsilon$ if and only if $A \subseteq B_d(B, \epsilon)$ and $B \subseteq B_d(A, \epsilon)$.
\\Let $\mathcal{F}(X)$ denote the set of all finite subsets of $X$. Under Vietoris topology, $\mathcal{F}(X)$ is dense in $\mathcal{K}(X)$ \cite{MR2665229, MR1269778}.
 Given a continuous function $f: X \to X$, it induces a continuous function $\overline{f}$: $\mathcal{K}$($X) \to$ $\mathcal{K}$($X)$ defined by $\overline{f}(K) = f(K)$, for every $K \in$ $\mathcal{K}$($X)$, where $f(K)$ = $\{f(k) : k\in K\}$. Note that continuity of $f$ implies continuity of $\overline{f}$.

\bigskip Let $(X, f_{1,\infty})$ be a non-autonomous discrete dynamical system and $\overline{f}_n$ be the function on $\mathcal{K}$($X)$, induced by $f_n$ on $X$, for every $n\in\mathbb{N}$. Then the sequence $\overline{f}_{1,\infty}$ = ($\overline{f}_1, \overline{f}_2$, $\ldots ,\overline{f}_n, \ldots )$ induces a non-autonomous discrete dynamical system ($\mathcal{K}$($X), \overline{f}_{1,\infty})$ and here $\overline{f}_1^n = \overline{f}_n \circ \ldots \circ \overline{f}_2\circ \overline{f}_1$.  Note that $\overline{f}_1^n = \overline{f^n_1}$.

\bigskip Let $(X,d_X)$ and $(Y,d_Y)$ be compact metric spaces. For non-autonomous discrete dynamical systems $(X,f_{1,\infty})$ and $(Y,g_{1,\infty})$, put $(f_{1,\infty}\times g_{1,\infty})= (h_{1,\infty}) = (h_1,h_2,\ldots,h_n,\ldots)$, where $h_n=f_n\times g_n$ , for each $n\in \mathbb{N}$. Thus, $(X\times Y, f_{1,\infty}\times g_{1,\infty})$ is a non-autonomous dynamical system, where $(X\times Y)$ is a compact metric space endowed with the product metric $d_{X\times Y}((x,y),(x',y'))= d_X(x,x')+d_Y(y,y')$. Here, $h_{1}^{n}=h_n\circ h_{n-1}\circ \cdots \circ h_2\circ h_1 = (f_n\times g_n)\circ (f_{n-1}\times g_{n-1})\circ \cdots \circ (f_2\times g_2)\circ (f_1\times g_1)$ \cite{MR3584171}.
\\
\\We shall use the following result.
\begin{lem}
Let $a,b,c,d$ be real numbers with $a<b$ and $c<d$. If there is an $L>0$ such that $(b-a)\leq L$ and $(d-c)\leq L$, then $min\{b,d\}-min\{a,c\}\leq L$ \cite{MR2922208}.
\end{lem}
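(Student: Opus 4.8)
The plan is to reduce the inequality to two symmetric cases determined by which of the two lower endpoints, $a$ or $c$, is the smaller, and in each case to pair that endpoint with the upper endpoint coming from the \emph{same} hypothesis. The guiding observation is that $\min\{a,c\}$ must equal one of $a$ or $c$, while $\min\{b,d\}$ is automatically bounded above by \emph{both} $b$ and $d$; the trick is simply to use whichever of these two bounds matches the endpoint realizing $\min\{a,c\}$, so that the given hypothesis $b-a\le L$ or $d-c\le L$ applies directly.

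First I would note that, since the hypotheses are symmetric under interchanging the pair $(a,b)$ with the pair $(c,d)$, there is no loss of generality in assuming $a\le c$, so that $\min\{a,c\}=a$. Then I would use the elementary fact that $\min\{b,d\}\le b$, which gives
\[
\min\{b,d\}-\min\{a,c\}=\min\{b,d\}-a\le b-a\le L,
\]
where the last inequality is exactly the hypothesis $(b-a)\le L$. This settles the case $a\le c$.

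The remaining case $c\le a$ is handled identically after interchanging the roles of the two pairs: here $\min\{a,c\}=c$ and $\min\{b,d\}\le d$, so that $\min\{b,d\}-\min\{a,c\}\le d-c\le L$. Since these two cases are exhaustive, the conclusion $\min\{b,d\}-\min\{a,c\}\le L$ follows in all situations.

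There is no genuine obstacle here; the statement is elementary and the only point requiring care is the correct pairing in the case split, namely ensuring that the endpoint realizing $\min\{a,c\}$ is matched with the upper endpoint from the same inequality so that the hypothesized bound of $L$ can be invoked. Writing $\min\{a,c\}$ explicitly as $a$ or $c$ (rather than manipulating the minima abstractly) is what makes this pairing transparent.
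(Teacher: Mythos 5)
Your proof is correct: splitting on whether $a\le c$ or $c\le a$, identifying $\min\{a,c\}$ explicitly, and then bounding $\min\{b,d\}$ by the upper endpoint from the same pair is exactly the right (and essentially the only) argument, and each step is valid. The paper itself does not prove this lemma --- it is only quoted with a citation to an external reference --- so there is nothing to compare against beyond noting that your argument is the standard elementary one and is complete.
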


\section{On Various Stronger forms of Sensitivity and Transitivity}
  
In this Section, we give the interrelations of various stronger forms of sensitivity and transitivity of the non-autonomous dynamical system $(X,f_{1,\infty})$ and its induced system $(\mathcal{K}(X),\overline{f_{1,\infty}})$. We provide two examples of non-autonomous systems to support our results.

\begin{thm}
The dynamical system $(X,f_{1,\infty})$ is syndetic sensitive if and only if induced system $(\mathcal{K}$($X),\overline{f}_{1,\infty}$) is syndetic sensitive.
\end{thm}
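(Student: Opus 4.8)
The plan is to prove the two implications separately, in each case reducing an arbitrary non-empty open subset of $\mathcal{K}(X)$ to a basic Vietoris open set $\langle U_1,\ldots,U_k\rangle$ and comparing the witness time-sets $N_{\overline{f}_{1,\infty}}(\cdot,\delta')$ and $N_{f_{1,\infty}}(\cdot,\delta)$. The guiding principle is that any superset of a syndetic set is syndetic (immediate from Definition 2.1), so in each direction it suffices to sandwich one witness set between the other and a syndetic set. I expect the reverse implication to be routine and the forward implication to carry the real difficulty.

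For the reverse implication, assume $(\mathcal{K}(X),\overline{f}_{1,\infty})$ is syndetic sensitive with constant $\delta'$. Given a non-empty open $V\subseteq X$, I would apply hyperspace syndetic sensitivity to the open set $\langle V\rangle=\{K\in\mathcal{K}(X):K\subseteq V\}$. For each $n\in N_{\overline{f}_{1,\infty}}(\langle V\rangle,\delta')$ there are $A,B\in\langle V\rangle$ with $d_H(\overline{f}_1^{\,n}(A),\overline{f}_1^{\,n}(B))>\delta'$; unwinding the Hausdorff metric, some image point, say $f_1^{\,n}(a)$ with $a\in A\subseteq V$, has distance exceeding $\delta'$ from $\overline{f}_1^{\,n}(B)$, whence $d(f_1^{\,n}(a),f_1^{\,n}(b))>\delta'$ for every $b\in B\subseteq V$. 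Fixing any such $b$ shows $n\in N_{f_{1,\infty}}(V,\delta')$. Thus $N_{\overline{f}_{1,\infty}}(\langle V\rangle,\delta')\subseteq N_{f_{1,\infty}}(V,\delta')$, and the left-hand set being syndetic forces the right-hand set to be syndetic; hence $(X,f_{1,\infty})$ is syndetic sensitive with the same constant $\delta'$.

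For the forward implication, assume $(X,f_{1,\infty})$ is syndetic sensitive with constant $\delta$. Fix a non-empty open $\mathcal{U}\subseteq\mathcal{K}(X)$ and choose a basic set $\langle U_1,\ldots,U_k\rangle\subseteq\mathcal{U}$; since $N_{f_{1,\infty}}(U_1,\delta)$ is syndetic, it is enough to show it is contained, after possibly shrinking the constant, in $N_{\overline{f}_{1,\infty}}(\langle U_1,\ldots,U_k\rangle,\delta')$. For a fixed $n\in N_{f_{1,\infty}}(U_1,\delta)$ pick $x,y\in U_1$ with $d(f_1^{\,n}(x),f_1^{\,n}(y))>\delta$ and padding points $p_j\in U_j$ for $2\le j\le k$, and set $A=\{x,p_2,\ldots,p_k\}$ and $B=\{y,p_2,\ldots,p_k\}$, both lying in $\langle U_1,\ldots,U_k\rangle$. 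Writing $g=f_1^{\,n}$ and $gP=\{g(p_2),\ldots,g(p_k)\}$, a short computation (comparing the relevant minima, in the spirit of Lemma 2.1) yields the exact value
\begin{equation}
d_H(g(A),g(B))=\min\Big(d(g(x),g(y)),\ \max\big(d(g(x),gP),\,d(g(y),gP)\big)\Big).\nonumber
\end{equation}
Since $d(g(x),g(y))>\delta$, this pair witnesses $n\in N_{\overline{f}_{1,\infty}}(\langle U_1,\ldots,U_k\rangle,\delta')$ as soon as the second argument of the outer minimum exceeds $\delta'$.

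The main obstacle is precisely controlling that second term: I must guarantee that at least one of the $\delta$-separated image points $g(x),g(y)$ stays a definite distance from $gP$, i.e.\ that the separated coordinate is not absorbed by the images of the padding points. This cannot be read off from sensitivity of $U_1$ alone, since for a badly chosen basic set the sets $g(U_j)$ could form a fine net of $g(U_1)$ and shadow every candidate. I would resolve this by exploiting the freedom in choosing the padding points: using that $\mathcal{F}(X)$ is dense in $\mathcal{K}(X)$, I would select $p_2,\ldots,p_k$ so that $gP$ clusters away from one of $g(x),g(y)$ (refining the sensitive pair within the bound $\mathrm{diam}\,g(U_1)>\delta$ if necessary), keeping $\max(d(g(x),gP),d(g(y),gP))$ above a fixed fraction of $\delta$. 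Taking $\delta'$ to be that fraction then gives the inclusion $N_{f_{1,\infty}}(U_1,\delta)\subseteq N_{\overline{f}_{1,\infty}}(\langle U_1,\ldots,U_k\rangle,\delta')$, and syndeticity transfers, completing the harder direction.
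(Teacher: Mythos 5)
Your reverse implication is correct and is essentially the paper's argument with $\langle V\rangle$ playing the role of the Hausdorff ball $B_{d_H}(\{x\},\epsilon)$: a Hausdorff separation of two compact subsets of $V$ forces a $\delta'$-separated pair of points of $V$, so $N_{\overline{f}_{1,\infty}}(\langle V\rangle,\delta')\subseteq N_{f_{1,\infty}}(V,\delta')$ and syndeticity passes down. The forward implication, however, has a genuine gap, and it sits exactly at the point you flag. With $A=\{x,p_2,\ldots,p_k\}$ and $B=\{y,p_2,\ldots,p_k\}$ your formula $d_H(g(A),g(B))=\min\bigl(d(g(x),g(y)),\,\max(d(g(x),gP),d(g(y),gP))\bigr)$ is right, but the second argument of the minimum cannot be bounded below by any choice of padding points: each $p_j$ is confined to $U_j$, so $g(p_j)$ is confined to $g(U_j)$, and nothing prevents, say, $f_1^{\,n}(U_2)$ from lying in a ball of radius $\delta/100$ about $f_1^{\,n}(x)$ while $f_1^{\,n}(U_3)$ lies in a ball of radius $\delta/100$ about $f_1^{\,n}(y)$. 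Then $\max(d(g(x),gP),d(g(y),gP))\leq\delta/100$ for every admissible $P$, so no pair of the form you construct witnesses sensitivity at time $n$ with any fixed fraction $\delta'$ of $\delta$. Density of $\mathcal{F}(X)$ gives no extra freedom here; it only lets one restrict attention to finite sets, it does not let you relocate the images $g(U_j)$.

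The missing idea is that the padding coordinates must themselves be perturbed. The paper fixes an anchor $f_1^{\,r}(x_1)$ and builds the comparison set $C=\{z_i\}$ coordinatewise: $z_i=x_i$ when $f_1^{\,r}(x_i)$ is already at distance more than $\delta_0/2$ from the anchor, and $z_i=y_i$ (a sensitive perturbation of $x_i$ inside its own ball) when it is closer, so that every point of $f_1^{\,r}(C)$ ends up farther than $\delta_0/2$ from the anchor and the Hausdorff distance is forced up. This requires all $k$ coordinates to be separated at a common time, and producing a syndetic set of such common times is the real content of the forward direction: the paper extracts it from the bounded gaps $L_i$ of the individual syndetic sets, Lemma 2.1 applied to $n_j=\min_i n(i,j)$, and a uniform-continuity pull-back of the separation from time $r_i$ to the common earlier time $r=\min_i r_i$ within a window of length $L$. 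Your proposal contains no substitute for this simultaneity argument, so the harder direction is not established.
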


\begin{proof}
Let $(X,f_{1,\infty})$ be syndetic sensitive with constant $\delta > 0$. Since $\mathcal{F}$($X)$ is dense in $\mathcal{K}$($X)$, it suffices to prove the result for $f_{1,\infty}|_{\mathcal{F}(X)}$.
Let $A$ = \{$x_i:1\leq i\leq k\} \in \mathcal{F}(X)$ and $B_{d_H}(A,\epsilon)$ be an $\epsilon$-neighbourhood of $A$ and $B_d(x_i,\epsilon)$ be an $\epsilon$-neighbourhood of $x_i$ for each $1 \leq i \leq k$. Write \{$n \in \mathbb{N}:\underset{y\in B_d(x_i,\epsilon)} {sup}$ $d(f_{1}^{n}(x_i),f_{1}^{n}(y))>\delta\} =\{n(i,j): n(i,j+1)>n(i,j);j \in \mathbb{N}\}$, for each i, $1\leq i \leq k$.

Since $(X,f_{1,\infty})$ is syndetic sensitive, therefore for each $1\leq i\leq k$, there exists an $L_i$ such that $n(i,j+1)-n(i,j)\leq L_i$, for all $j \in \mathbb{N}$.
Let $L=max\{L_i:1\leq i\leq k\}$. Then for each $x_i, (1\leq i\leq k)$, there exists $y_i \in B_{d}(x_i,\epsilon)$ and $0\leq r_i\leq L$ with $d(f_{1}^{r}(x_i),f_{1}^{r}(y_{i}))>\delta$, $1\leq i\leq k$.

Let $r=min\{r_i:1\leq i\leq k\}$. Since each $f_{n}, n\in \mathbb{N}$ is continuous, therefore, $f_{1}^{n}$ is continuous , for all $n \in \mathbb{N}$ and hence $X$ being compact, $f_{1}^{n}$ is uniformly continuous  for each $n$. Thus, $f_{1}^{i}$ is uniformly continuous for each i, $0\leq i\leq L$ and hence there exists $\delta_0$, $0\leq \delta_{0}\leq \delta$ such that $d(f_{1}^{r}(x_{i}),f_{1}^{r}(y_{i})>\delta$ for each i, $1\leq i\leq k$.
We take $C= \{z_i:1\leq i\leq k\}$ such that following conditions hold

\begin{enumerate}
\item If $d(f_{1}^{r}(x_{1}),f_{1}^{r}(x_{i}))\leq \delta_{0}/2$, then $z_i = y_i$; 
\item If $d(f_{1}^{r}(x_{1}),f_{1}^{r}(x_{i}))> \delta_{0}/2$, then $z_i = x_i$. 
\end{enumerate}
Therefore, $d(f_{1}^{r}(x_{1}),f_{1}^{r}(z_{i}))> \delta_{0}/2$, for each i, $1\leq i\leq k$. Consequently, $d_{H}(\overline{f_{1}^{r}}(A),\overline{f_{1}^{r}}(C))$ $>\delta_{0}/2$. Let $n_{j}=min\{n(i,j):1\leq i\leq k\}$ , for all $j\geq 0$. Since \{$n \in \mathbb{N}: sup$ $d(f_{1}^{n}(x_i),f_{1}^{n}(y))>\delta\} =\{n(i,j): n(i,j+1)>n(i,j);j \in \mathbb{N}\}$ is syndetic with $n(i,j+1)-n(i,j)\leq L_{i}<L$ , for all $j\in \mathbb{N}$ and for each i, $1\leq i\leq k$. Therefore, by lemma 2.1, $\{n_j :j\geq0\}$ is also syndetic with $n_{j+1}-n_{j}<L$. Hence, $N_{\overline{f_{1,\infty}}}(B_{d_H}(A,\epsilon),\delta/2)$ is syndetic. Thus, $(\mathcal{K}$($X),\overline{f}_{1,\infty}$) is syndetic sensitive.

Conversely, suppose that $(\mathcal{K}$($X),\overline{f}_{1,\infty}$) is syndetic sensitive with constant of syndetic sensitivity $\delta>0$. For any $\epsilon>0$, let $x\in X$ and $U$ be the $\epsilon$- neighbourhood of $x$ in $X$. Since $B_{d_H}(\{x\},\epsilon)$ is an $\epsilon$- neighbourhood of $\{x\}$ in $\mathcal{K}$($X)$ and we know $\overline{f_{1}^{\infty}}$ is syndetic sensitive, so $N_{\overline{f_{1,\infty}}}[{B_{d_H}(\{x\},\epsilon),\delta)}]$ is syndetic and therefore there exist $A\in B_{d_H}(\{x\},\epsilon)$ and $n\geq 0$ such that $d_{H}(\overline{f_{1}^{n}}(\{x\}),\overline{f_{1}^{n}}(A))>\delta$.

Hence, there exists $y\in A \subset U$ such that $d(f_{1}^{n}(x),f_{1}^{n}(y))>\delta$ which implies $N_{\overline{f_{1,\infty}}}$ $[\cup{B_{d_H}(\{x\},\epsilon),\delta)};x\in U] \subset N_{f_{1,\infty}}(U,\delta)$. Since $N_{\overline{f_{1,\infty}}}[\cup{B_{d_H}(\{x\},\epsilon),\delta)};x\in U]$ is syndetic, therefore $N_{f_{1,\infty}}(U,\delta)$ is syndetic. Hence, $(X,f_{1,\infty})$ is syndetic sensitive.
\end{proof}

\begin{thm}
Let $(X,f_{1,\infty})$ and $(Y,g_{1,\infty})$ be two dynamical systems. If $(X,f_{1,\infty})$ or $(Y,g_{1,\infty})$ is syndetic sensitive, then $(X\times Y,f_{1,\infty}\times g_{1,\infty})$ is syndetic sensitive.
\end{thm}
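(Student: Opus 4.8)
The plan is to assume without loss of generality that $(X,f_{1,\infty})$ is the syndetic sensitive factor, say with constant $\delta>0$, and to show that the same constant $\delta$ witnesses syndetic sensitivity for the product system $(X\times Y, h_{1,\infty})$, where $h_n=f_n\times g_n$ and $h_1^n=f_1^n\times g_1^n$. The first step is a reduction to basic open boxes: given an arbitrary non-empty open set $W\subseteq X\times Y$, I would pick a point $(x,y)\in W$ and, by definition of the product topology, choose non-empty open sets $U\subseteq X$ and $V\subseteq Y$ with $(x,y)\in U\times V\subseteq W$. It then suffices to produce, for each $n$ in a syndetic set, a pair of points of $U\times V$ whose $n$-th images are $\delta$-separated, since such a pair lies in $W$ as well.

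The core of the argument is the containment $N_{f_{1,\infty}}(U,\delta)\subseteq N_{h_{1,\infty}}(W,\delta)$. To establish it, I would fix any point $q\in V$ and take an arbitrary $n\in N_{f_{1,\infty}}(U,\delta)$; by definition there exist $p,p'\in U$ with $d_X(f_1^n(p),f_1^n(p'))>\delta$. The two points $(p,q)$ and $(p',q)$ both lie in $U\times V\subseteq W$, and by additivity of the product metric $d_{X\times Y}\big(h_1^n(p,q),h_1^n(p',q)\big)=d_X(f_1^n(p),f_1^n(p'))+d_Y(g_1^n(q),g_1^n(q))=d_X(f_1^n(p),f_1^n(p'))>\delta$, since the second coordinate contributes nothing. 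Hence $n\in N_{h_{1,\infty}}(W,\delta)$, which gives the desired inclusion.

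Finally, since $(X,f_{1,\infty})$ is syndetic sensitive the set $N_{f_{1,\infty}}(U,\delta)$ is syndetic, and a syndetic set remains syndetic when enlarged: if the gap constant $a$ works for $N_{f_{1,\infty}}(U,\delta)$, then for every $i$ one has $\{i,i+1,\ldots,i+a\}\cap N_{h_{1,\infty}}(W,\delta)\supseteq\{i,i+1,\ldots,i+a\}\cap N_{f_{1,\infty}}(U,\delta)\neq\emptyset$. Therefore $N_{h_{1,\infty}}(W,\delta)$ is syndetic for every non-empty open $W$, so the product system is syndetic sensitive with constant $\delta$. The symmetric case, where $(Y,g_{1,\infty})$ is the sensitive factor, is handled identically by perturbing the second coordinate and fixing the first.

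This proof presents essentially no obstacle; the only point requiring care is the choice of metric, and I would highlight that the argument relies on the fact that syndetic sensitivity asks only for the \emph{existence} of one separated pair in each open set, so perturbing a single coordinate suffices and the constant is preserved exactly. Any honest product metric equivalent to the sum (for instance a max metric) would work the same way, but I would write the computation against the stated additive metric $d_{X\times Y}$ to keep the distance bound clean.
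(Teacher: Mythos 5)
Your proposal is correct and follows essentially the same route as the paper: both arguments reduce to the inclusion $N_{f_{1,\infty}}(U,\delta)\subseteq N_{f_{1,\infty}\times g_{1,\infty}}(U\times V,\delta)$ and the fact that a superset of a syndetic set is syndetic. You merely supply details the paper leaves implicit, namely the reduction of an arbitrary open set to a basic box $U\times V$ and the explicit verification of the inclusion by holding the second coordinate fixed.
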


\begin{proof}
Suppose $(X,f_{1,\infty})$ is syndetic sensitive with constant of syndetic sensitivity $\delta>0$. Let $U\times V$ be a non-empty open set in $X\times Y$. Then, $U$ is a non-empty open set in $X$, therefore by syndetic sensitivity of $(X,f_{1,\infty})$, we have that $N_{f_{1,\infty}}(U,\delta)$ is syndetic. Since $N_{f_{1,\infty}}(U,\delta)\cup N_{g_{1,\infty}}(V,\delta)\subset N_{f_{1,\infty}\times g_{1,\infty}}(U\times V,\delta)$, therefore $N_{f_{1,\infty}\times g_{1,\infty}}(U\times V,\delta)$ is also syndetic. Thus, $(X\times Y,f_{1,\infty}\times g_{1,\infty})$ is syndetic sensitive. 
Similarly, the result holds when $(Y,g_{1,\infty})$ is syndetic sensitive.
\end{proof}

\begin{Cor}
Let $(X,f_{1,\infty})$ and $(Y,g_{1,\infty})$ be two dynamical systems. If $(X,f_{1,\infty})$ or $(Y,g_{1,\infty})$ is syndetic sensitive, then $(\mathcal{K}$($X\times Y),\overline{(f\times g)}_{1,\infty}$) is syndetic sensitive.
\end{Cor}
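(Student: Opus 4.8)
The plan is to obtain the corollary as an immediate consequence of the two preceding results, Theorem 3.2 and Theorem 3.1, applied in succession to the product system. No new estimate is needed; the work has already been done in those theorems, and this statement is the natural composition of them.

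First I would invoke Theorem 3.2. Since by hypothesis one of $(X,f_{1,\infty})$ or $(Y,g_{1,\infty})$ is syndetic sensitive, Theorem 3.2 yields at once that the product non-autonomous system $(X\times Y, f_{1,\infty}\times g_{1,\infty})$ is syndetic sensitive. Here I would recall from Section 2 that $X\times Y$, equipped with the product metric $d_{X\times Y}((x,y),(x',y'))=d_X(x,x')+d_Y(y,y')$, is again a compact metric space, and that $f_{1,\infty}\times g_{1,\infty}=(h_{1,\infty})$ with $h_n=f_n\times g_n$ continuous is a legitimate non-autonomous system on it.

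Next I would apply Theorem 3.1, but now with the base space taken to be the compact metric space $X\times Y$ and the non-autonomous system taken to be $h_{1,\infty}=f_{1,\infty}\times g_{1,\infty}$. Theorem 3.1 asserts that a non-autonomous system on a compact metric space is syndetic sensitive if and only if its induced hyperspace system is syndetic sensitive. Since the product system is syndetic sensitive by the previous step, its induced system $(\mathcal{K}(X\times Y),\overline{h}_{1,\infty})=(\mathcal{K}(X\times Y),\overline{(f\times g)}_{1,\infty})$ is therefore syndetic sensitive, which is exactly the assertion of the corollary.

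There is essentially no real obstacle here; the only points to verify are that the hypotheses of Theorem 3.1 genuinely apply to the product, namely that $X\times Y$ is compact metric (so that the Hausdorff metric and the Vietoris topology coincide on $\mathcal{K}(X\times Y)$) and that $\overline{(f\times g)}_{1,\infty}$ is precisely the induced system of $f_{1,\infty}\times g_{1,\infty}$. Both facts are furnished by the setup in Section 2, so chaining the two theorems closes the argument.
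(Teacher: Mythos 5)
Your argument is correct and is exactly the paper's route: the paper's proof of this corollary is the one-line remark that it follows from Theorem 3.1 and Theorem 3.2, which is precisely the chain you carry out (Theorem 3.2 gives syndetic sensitivity of the product system, and Theorem 3.1 transfers it to the induced hyperspace system). Your additional checks that $X\times Y$ is compact metric and that $\overline{(f\times g)}_{1,\infty}$ is the induced system of $h_{1,\infty}$ are sensible but already furnished by the setup in Section 2.
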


\begin{proof}
The proof follows from Theorem 3.1 and Theorem 3.2.
\end{proof}

\begin{exm} \end{exm}
Let $I$ be the interval $[0,1]$ and $f$ on $I$ be defined by: 
\[ f(x) = \begin{cases}
1/2-2x,  & \text{for} \ x \in \left[0, \frac{1}{4}\right] \\
4x-1, & \text{for} \ x \in \left[\frac{1}{4},\frac{1}{2}\right] \\
2-2x, & \text{for} \ x \in \left[\frac{1}{2},1\right].
\end{cases} \]

Let $f_{2n}(x)=x$, for all $x$ in [0,1] and $f_{2n-1}(x)=f(x)$, for all $n \in \mathbb{N}$. Since $f(x)$ is transitive on $I$, therefore it is cofinitely sensitive\cite{MR2351026} and hence syndetic sensitive. Hence, we can say that the non-autonomous system $(I,f_{1,\infty})$ is syndetic sensitive. Thus, by Theorem 3.1, the induced system $(\mathcal{K})$($I),\overline{f}_{1,\infty})$ is syndetic sensitive. \\Also, let $(Y,g_{1,\infty})$ be any non-autonomous system, then by Theorem 3.2 and Corollary 3.1, we get that systems $(I\times Y,f_{1,\infty}\times g_{1,\infty})$ and $(\mathcal{K}$($I\times Y),\overline{(f\times g)}_{1,\infty}$) are both syndetic sensitive.

\begin{thm}
The dynamical system $(X,f_{1,\infty})$ is multi-sensitive if and only if $(\mathcal{K}$($X),\overline{f}_{1,\infty}$) is multi-sensitive.
\end{thm}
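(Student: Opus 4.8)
The plan is to prove both implications by transferring a single common witnessing time $n$ between the two systems, exploiting that multi-sensitivity (Definition 2.15) only asks for one integer $n$ lying in the intersection $\bigcap_i N(\cdot,\delta)$ over finitely many prescribed open sets. The converse direction is the cleaner one, so I would dispatch it first. Suppose $(\mathcal{K}(X),\overline{f}_{1,\infty})$ is multi-sensitive with constant $\delta$, and let $V_1,\dots,V_p$ be non-empty open subsets of $X$. I would feed the hyperspace the basic Vietoris open sets $\langle V_s\rangle=\{K\in\mathcal{K}(X):K\subseteq V_s\}$, each non-empty since it contains the singletons $\{x\}$ with $x\in V_s$. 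Multi-sensitivity of $\mathcal{K}(X)$ then yields a single $n$ with, for every $s$, compact sets $A_s,B_s\subseteq V_s$ satisfying $d_H(\overline{f_1^n}(A_s),\overline{f_1^n}(B_s))>\delta$. As $\overline{f_1^n}(A_s)$ is compact, the larger of the two directed Hausdorff distances is attained, so there is a point $f_1^n(a)$ with $a\in A_s\subseteq V_s$ (interchanging $A_s,B_s$ if necessary) whose distance to all of $\overline{f_1^n}(B_s)$ exceeds $\delta$; picking any $b\in B_s\subseteq V_s$ gives $a,b\in V_s$ with $d(f_1^n(a),f_1^n(b))>\delta$, i.e. $n\in N_{f_{1,\infty}}(V_s,\delta)$. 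Since $n$ is common to all $s$, this shows $X$ is multi-sensitive with the same constant.

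For the forward direction I would mimic the point-selection device of Theorem 3.1, but without its uniform-continuity synchronisation step, because here the common time is supplied directly by multi-sensitivity of $X$. Assume $(X,f_{1,\infty})$ is multi-sensitive with constant $\delta$; I claim $\delta/4$ works for $\mathcal{K}(X)$. Using the density of $\mathcal{F}(X)$ in $\mathcal{K}(X)$, for given open sets $\mathcal{V}_1,\dots,\mathcal{V}_p$ I pick finite sets $A_s=\{x_1^{(s)},\dots,x_{k_s}^{(s)}\}\in\mathcal{V}_s$ and a radius $\epsilon>0$ with $B_{d_H}(A_s,\epsilon)\subseteq\mathcal{V}_s$ for all $s$. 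Applying multi-sensitivity of $X$ to the finite family of balls $\{B_d(x_i^{(s)},\epsilon)\}_{s,i}$ produces one integer $n$ for which every such ball contains a $\delta$-separated pair at time $n$; consequently each $B_d(x_i^{(s)},\epsilon)$ contains a point $w_i^{(s)}$ with $d(f_1^n(x_i^{(s)}),f_1^n(w_i^{(s)}))>\delta/2$.

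Then, for each $s$ separately, I would build a competitor set $C_s=\{z_1^{(s)},\dots,z_{k_s}^{(s)}\}$ by the dichotomy of Theorem 3.1: set $z_i^{(s)}=w_i^{(s)}$ when $d(f_1^n(x_1^{(s)}),f_1^n(x_i^{(s)}))\le\delta/4$, and $z_i^{(s)}=x_i^{(s)}$ otherwise. A triangle-inequality check then yields $d(f_1^n(x_1^{(s)}),f_1^n(z_i^{(s)}))>\delta/4$ for every $i$, so the single image point $f_1^n(x_1^{(s)})$ sits at distance exceeding $\delta/4$ from the entire set $\overline{f_1^n}(C_s)$, forcing $d_H(\overline{f_1^n}(A_s),\overline{f_1^n}(C_s))>\delta/4$. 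Since each $z_i^{(s)}$ lies within $\epsilon$ of $x_i^{(s)}$, we get $C_s\in B_{d_H}(A_s,\epsilon)\subseteq\mathcal{V}_s$, whence $n\in\bigcap_s N_{\overline{f_{1,\infty}}}(\mathcal{V}_s,\delta/4)$ and $\mathcal{K}(X)$ is multi-sensitive.

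I expect the main obstacle to be the lower bound on the Hausdorff distance in the forward direction: merely producing a $\delta$-separated pair inside each $B_d(x_i^{(s)},\epsilon)$ does not by itself separate the compact sets in $d_H$, because the moved image point could drift next to some other retained image point. The dichotomy above is precisely what circumvents this, by pinning one reference image point far from every point of the competitor set rather than attempting to control the full two-sided Hausdorff distance. By contrast, the synchronisation difficulty that forced the uniform-continuity argument in Theorem 3.1 evaporates here, since multi-sensitivity of $X$ already hands us a single time $n$ valid simultaneously for all the chosen balls.
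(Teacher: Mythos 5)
Your proposal is correct and follows essentially the same route as the paper: the forward direction uses density of $\mathcal{F}(X)$ and the identical dichotomy construction of a competitor set $C_s$ that pins the image of a reference point $f_1^n(x_1^{(s)})$ away from all of $\overline{f_1^n}(C_s)$, while the converse simply transfers the common time back to $X$. The only differences are cosmetic: in the converse you work with the Vietoris sets $\langle V_s\rangle$ rather than Hausdorff balls about singletons, and your more careful constant bookkeeping ($\delta/2$ then $\delta/4$) quietly repairs the paper's tacit assumption that the $\delta$-separated pair in $B_d(x_{j,i},\epsilon)$ may be taken to contain the centre $x_{j,i}$.
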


\begin{proof}
Let $(X,f_{1,\infty})$ be multi-sensitive with constant $\delta > 0$. Since $\mathcal{F}$($X)$ is dense in $\mathcal{K}$($X)$, it suffices to prove the result for $f_{1,\infty}|_{\mathcal{F}(X)}$.
For $k\geq 1$, let $A_j$ = \{$x_{j,i}:1\leq i\leq k_j\} \in \mathcal{F}(X)$. Let $B_{d_H}(A_j,\epsilon)$ be the $\epsilon$-neighbourhood of $A_j$ and $B_d(x_{j,i},\epsilon)$ be the $\epsilon$-neighbourhood of $x_{j,i}$ for each i, $1 \leq i \leq k_j$. Since $(X,f_{1,\infty})$ is multi-sensitive, for each j, $1\leq j\leq k$ and for each i, $1\leq i\leq k_{j}$, there exists $n>0$ such that $\underset{y \in B_d(x_{j,i},\epsilon)}{sup}d(f_{1}^{n}(x_{j,i}),f_{1}^{n}(y))>\delta$ for every j, $1\leq j\leq k$ and for each i, $1\leq i\leq k_j$. We shall show that $\underset{B\in B_{d_H}(A,\epsilon)}{sup}d(\overline{f_{1}^{n}}(A_j),\overline{f_{1}^{n}}(B))>\delta/2$, for every j, $1\leq j\leq k$. By definition of multi sensitivity and from the above argument for each $x_{j,i}$, there exists $y_{j,i}\in B_d(x_{j,i},\epsilon)$ such that $d(f_{1}^{n}(x_{j,i}),f_{1}^{n}(y_{j,i}))>\delta$.
For each j, $1\leq j\leq k$,take $C_j=\{z_{j,1},z_{j,2},\ldots,z_{j,k_j}\}$ such that the following conditions hold
\begin{enumerate}
\item If $d(f_{1}^{n}(x_{j,1}),f_{1}^{n}(x_{j,i}))\leq \delta/2$, then $z_{j,i} = y_{j,i}$; 
\item If $d(f_{1}^{n}(x_{j,1}),f_{1}^{n}(x_{j,i}))> \delta/2$, then $z_{j,i}= x_{j,i}$.
\end{enumerate}
Therefore, $d(f_{1}^{n}(x_{j,1}),f_{1}^{n}(z_{j,i})> \delta/2$ , for all j, $1\leq j\leq k$ and for all i, $1\leq i\leq k_j$.
Consequently, $d_H(\overline{f_{1}^{n}}(A_j),\overline{f_{1}^{n}}(C_j))>\delta/2$. Therefore, $\underset{1\leq j\leq k}{\cap}N_{\overline{f_{1,\infty}}}((A_j,\epsilon),\delta/2)$ is non-empty for any $k\geq 1$ and any $\epsilon>0$. 
Thus, $(\mathcal{K}$($X),\overline{f}_{1,\infty}$) is multi-sensitive.

Conversely, assume that $(\mathcal{K}$($X),\overline{f}_{1,\infty}$) is multi-sensitive with constant of multi sensitivity $\delta>0$. For any $\epsilon>0$ and any $k\geq 1$, let $x_i \in X$ and $U_i$ be the $\epsilon$-neighbourhood of $x_i$, for each i, $1\leq i\leq k$ respectively. Since $B_{d_H}(\{x_i\},\epsilon)$ is an open $\epsilon$-neighbourhood of $\{x_i\}$ in $(\mathcal{K}$($X)$ and $\overline{f_{1}^{\infty}}$ is multi-sensitive, therefore $\underset{1\leq i\leq k}{\cap}N_{\overline{f_{1,\infty}}}(B_{d_H}\{x_i\},\epsilon)$ is non-empty. Let $m\in \underset{1\leq i\leq k}{\cap}N_{\overline{f_{1,\infty}}}(B_{d_H}\{x_i\},\epsilon)$, then for each i, $1\leq i\leq k$, there exists $A_i \in B_{d_H}(\{x_i\},\epsilon)$ such that $d_H(\overline{f_{1}^{m}}(\{x_i\}),\overline{f_{1}^{m}}(\{A_i\}))>\delta$. Therefore, there exists $y_i \in A_i$ such that $d(f_{1}^{m}(x_i),f_{1}^{m}(y_i))>\delta$ , for all i, $1\leq i\leq k$. Hence, $m\in N_{f_{1,\infty}}(U_i,\delta)$ , for all i, $1\leq i\leq k$. Thus $\underset{1\leq i\leq k}{\cap}N_{f_{1,\infty}}(U_i,\delta)$ is non empty implying $(X,f_{1,\infty})$ is multi-sensitive.
\end{proof}
We recall the following result (\cite{MR2}, Theorem 3.1).\\ 
\begin{thm}
Let $(X,f_{1,\infty})$ and $(Y,g_{1,\infty})$ be two dynamical systems. The system $(X\times Y,f_{1,\infty}\times g_{1,\infty})$ is multi-sensitive if and only if $(X,f_{1,\infty})$ or $(Y,g_{1,\infty})$ is multi-sensitive.
\end{thm}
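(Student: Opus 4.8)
The plan is to prove the two implications separately, treating the statement as a two-sided criterion. The ``if'' direction (multi-sensitivity of one factor forces multi-sensitivity of the product) is the direct one and mirrors the argument of Theorem 3.2, while the ``only if'' direction is best handled by contradiction.

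For the easy direction, suppose $(X,f_{1,\infty})$ is multi-sensitive with constant $\delta>0$ (the case of $(Y,g_{1,\infty})$ being symmetric). Given $k\geq 1$ and arbitrary non-empty open sets $W_1,\ldots,W_k$ in $X\times Y$, I would first reduce to basic open sets: each $W_i$ contains some $U_i\times V_i$ with $U_i$ open in $X$ and $V_i$ open in $Y$, and since $N_{f_{1,\infty}\times g_{1,\infty}}(\cdot,\delta)$ is monotone with respect to set inclusion, it suffices to produce a common integer for the family $\{U_i\times V_i\}$. The key observation is the inclusion $N_{f_{1,\infty}}(U_i,\delta)\subseteq N_{f_{1,\infty}\times g_{1,\infty}}(U_i\times V_i,\delta)$: if $x,x'\in U_i$ witness $d_X(f_1^n(x),f_1^n(x'))>\delta$, then for any fixed $y\in V_i$ the pair $(x,y),(x',y)$ lies in $U_i\times V_i$ and, because the product metric is the sum, $d_{X\times Y}(h_1^n(x,y),h_1^n(x',y))=d_X(f_1^n(x),f_1^n(x'))>\delta$. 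Intersecting over $i$ and invoking multi-sensitivity of $(X,f_{1,\infty})$ then yields a common integer, so the product is multi-sensitive with the same constant $\delta$.

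For the converse, suppose the product is multi-sensitive with constant $\delta>0$, and argue by contradiction, assuming that neither $(X,f_{1,\infty})$ nor $(Y,g_{1,\infty})$ is multi-sensitive. Applying the negation of multi-sensitivity to the threshold $\delta/2$, I obtain finitely many non-empty open sets $U_1,\ldots,U_{k_1}$ in $X$ with $\bigcap_i N_{f_{1,\infty}}(U_i,\delta/2)=\emptyset$, and similarly $V_1,\ldots,V_{k_2}$ in $Y$ with $\bigcap_j N_{g_{1,\infty}}(V_j,\delta/2)=\emptyset$. I would then feed the $k_1 k_2$ product sets $U_i\times V_j$ into the multi-sensitivity of the product to obtain a single integer $n$ lying in every $N_{f_{1,\infty}\times g_{1,\infty}}(U_i\times V_j,\delta)$. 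The emptiness of the two intersections guarantees indices $i_0$ and $j_0$ with $n\notin N_{f_{1,\infty}}(U_{i_0},\delta/2)$ and $n\notin N_{g_{1,\infty}}(V_{j_0},\delta/2)$; that is, all $x,x'\in U_{i_0}$ satisfy $d_X(f_1^n(x),f_1^n(x'))\leq\delta/2$ and all $y,y'\in V_{j_0}$ satisfy $d_Y(g_1^n(y),g_1^n(y'))\leq\delta/2$. But then every pair of points in $U_{i_0}\times V_{j_0}$ has $h_1^n$-images at distance at most $\delta$, contradicting $n\in N_{f_{1,\infty}\times g_{1,\infty}}(U_{i_0}\times V_{j_0},\delta)$.

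The whole argument hinges on the product metric being additive across coordinates, which cleanly splits the threshold $\delta$ into two halves. I expect the main obstacle to be the only-if direction: one must pass to the threshold $\delta/2$, combine the two separately witnessed bad families into a single product family, and then locate the bad indices $i_0,j_0$ simultaneously so that the additive contradiction closes. Care is also needed in the if direction to justify the reduction to basic open sets via the monotonicity of $N_{f_{1,\infty}\times g_{1,\infty}}(\cdot,\delta)$ in its set argument.
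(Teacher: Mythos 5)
Your proposal is correct and complete, but note that the paper does not actually prove this statement: Theorem 3.4 is quoted verbatim from Ding--Lu--Wang (\cite{MR2}, Theorem 3.1), so there is no in-paper argument to compare against. Your two directions are exactly the natural extensions of what the paper does for the neighbouring results: the ``if'' direction uses the inclusion $N_{f_{1,\infty}}(U_i,\delta)\subseteq N_{f_{1,\infty}\times g_{1,\infty}}(U_i\times V_i,\delta)$ (valid because the product metric is the sum and the $Y$-coordinates can be taken equal), which is the same device as in Theorem 3.2; the ``only if'' direction by contradiction, splitting $\delta$ into two halves via the additivity of the product metric, parallels the converse of Theorem 3.6 (where the paper uses $\delta/3$; your $\delta/2$ is the sharp choice and works since one of the two summands of a sum exceeding $\delta$ must exceed $\delta/2$). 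The one step genuinely specific to multi-sensitivity --- combining the two bad families $\{U_i\}$ and $\{V_j\}$ into the single family $\{U_i\times V_j\}$ of $k_1k_2$ sets so that a single $n$ can be tested against both empty intersections simultaneously --- is handled correctly, and the reduction to basic open sets via monotonicity of $N(\cdot,\delta)$ is a legitimate (and necessary) preliminary. No gaps.
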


Based on Theorem 3.3 and Theorem 3.4, we have the following Corollary.

\begin{Cor}
Let $(X,f_{1,\infty})$ and $(Y,g_{1,\infty})$ be two dynamical systems. Then the system $(\mathcal{K}$($X\times Y),\overline{(f\times g)}_{1,\infty}$) is multi-sensitive if and only if $(X,f_{1,\infty})$ or $(Y,g_{1,\infty})$ is multi-sensitive.
\end{Cor}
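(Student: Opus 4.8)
The plan is to obtain the stated equivalence by simply composing the two biconditionals already available. The key observation is that the product space $X\times Y$, equipped with the product metric $d_{X\times Y}$, is itself a compact metric space, and that $h_{1,\infty}=f_{1,\infty}\times g_{1,\infty}=(h_n)_{n=1}^{\infty}$ with $h_n=f_n\times g_n$ is a bona fide non-autonomous discrete dynamical system on it. Consequently Theorem 3.3, which was established for an arbitrary compact metric system, can be applied verbatim with $(X\times Y,h_{1,\infty})$ playing the role of $(X,f_{1,\infty})$ and its induced hyperspace system $(\mathcal{K}(X\times Y),\overline{h}_{1,\infty})=(\mathcal{K}(X\times Y),\overline{(f\times g)}_{1,\infty})$ playing the role of $(\mathcal{K}(X),\overline{f}_{1,\infty})$.

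First I would invoke Theorem 3.4 to record the equivalence
\[
(X,f_{1,\infty})\ \text{or}\ (Y,g_{1,\infty})\ \text{is multi-sensitive}\iff (X\times Y,f_{1,\infty}\times g_{1,\infty})\ \text{is multi-sensitive}.
\]
Next I would apply Theorem 3.3 to the single system $(X\times Y,f_{1,\infty}\times g_{1,\infty})$ to obtain
\[
(X\times Y,f_{1,\infty}\times g_{1,\infty})\ \text{is multi-sensitive}\iff (\mathcal{K}(X\times Y),\overline{(f\times g)}_{1,\infty})\ \text{is multi-sensitive}.
\]
Chaining the two displayed biconditionals yields precisely the assertion of the Corollary.

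The only point requiring care is confirming that Theorem 3.3 is genuinely applicable to the product system, that is, that its hypotheses (a compact metric base space and a sequence of continuous self-maps) are satisfied by $(X\times Y,h_{1,\infty})$, and that the induced hyperspace system built from $h_n=f_n\times g_n$ coincides with $\overline{(f\times g)}_{1,\infty}$ as written in the statement. Both are immediate: compactness of $X\times Y$ and continuity of each $h_n=f_n\times g_n$ are standard, while from the definition $\overline{h}_n(K)=h_n(K)$ for $K\in\mathcal{K}(X\times Y)$ we read off $\overline{h}_{1,\infty}=\overline{(f\times g)}_{1,\infty}$. I do not expect any real obstacle here; the mathematical substance lies entirely in Theorems 3.3 and 3.4, and the Corollary is a purely formal consequence of the transitivity of logical equivalence.
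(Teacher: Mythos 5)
Your proposal is correct and matches the paper's argument exactly: the Corollary is stated there as an immediate consequence of Theorem 3.3 (applied to the product system $(X\times Y, f_{1,\infty}\times g_{1,\infty})$) and Theorem 3.4, chained as biconditionals. Your additional check that the product system satisfies the hypotheses of Theorem 3.3 is a reasonable bit of diligence but introduces nothing beyond what the paper implicitly assumes.
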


\begin{exm} \end{exm}
Let $I$ be the interval $[0,1]$ and $f$ on $I$ be defined by: 

\[ f(x) = \begin{cases}
2x+1/2,  & \text{for} \ x \in \left[0, \frac{1}{4}\right] \\
-2x+3/2, & \text{for} \ x \in \left[\frac{1}{4},\frac{3}{4}\right] \\
2x-3/2, & \text{for} \ x \in \left[\frac{3}{4},1\right].
\end{cases} \]
Let $f_{2n}(x)=x$, for all $x$ in [0,1] and $f_{2n-1}(x)=f(x)$, for all $n \in \mathbb{N}$. Clearly, the autonomous system $(I,f)$ is sensitive and thus cofinitely sensitive \cite{MR2351026}. Thus, we can say that $(I,f_{1,\infty})$ is also cofinitely sensitive and hence multi-sensitive. So, by Theorem 3.3 the induced system $(\mathcal{K})$($I),\overline{f}_{1,\infty})$ is multi-sensitive. \\Also, let $(Y,g_{1,\infty})$ be any non-autonomous system, then by Theorem 3.4 and Corollary 3.2, we get that systems $(I\times Y,f_{1,\infty}\times g_{1,\infty})$ and $(\mathcal{K}$($I\times Y),\overline{(f\times g)}_{1,\infty}$) are both multi-sensitive.
\begin{thm}
Let $(X,f_{1,\infty})$ be a dynamical system. If $(\mathcal{K}$($X),\overline{f}_{1,\infty}$) is ergodically sensitive, then so is $(X,f_{1,\infty})$.
\end{thm}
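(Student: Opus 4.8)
The plan is to establish the implication in the same spirit as the converse directions of Theorems 3.1 and 3.3, since passing from the hyperspace system down to the base system is the more tractable direction. The two ingredients I would use are: (i) a set-theoretic inclusion between the relevant $N$-sets, and (ii) the fact that positive upper density is inherited by any superset of a set of positive upper density, which is immediate from $|S\cap N_n|\le |T\cap N_n|$ whenever $S\subseteq T$, so that the corresponding limits superior obey the same inequality.

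First I would fix the sensitivity constant. Assume $(\mathcal{K}(X),\overline{f}_{1,\infty})$ is ergodically sensitive with constant $\delta>0$, and I claim the same $\delta$ works for $(X,f_{1,\infty})$. Let $U$ be an arbitrary non-empty open subset of $X$. Choose $x\in U$ and $\epsilon>0$ so small that $B_d(x,\epsilon)\subseteq U$. Then $B_{d_H}(\{x\},\epsilon)$ is a non-empty open subset of $\mathcal{K}(X)$, and by ergodic sensitivity of the induced system the set $N_{\overline{f_{1,\infty}}}(B_{d_H}(\{x\},\epsilon),\delta)$ has positive upper density.

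The heart of the argument is the inclusion $N_{\overline{f_{1,\infty}}}(B_{d_H}(\{x\},\epsilon),\delta)\subseteq N_{f_{1,\infty}}(U,\delta)$. To see this, take $n$ in the left-hand set, so there are $A,B\in B_{d_H}(\{x\},\epsilon)$ with $d_H(\overline{f_1^n}(A),\overline{f_1^n}(B))>\delta$. Since $A,B\subseteq B_d(x,\epsilon)\subseteq U$, every point they contain lies in $U$. Using $\overline{f_1^n}(A)=f_1^n(A)$ together with the identity $d_H(f_1^n(A),f_1^n(B))=\max\{\sup_{a\in A}d(f_1^n(a),f_1^n(B)),\ \sup_{b\in B}d(f_1^n(b),f_1^n(A))\}$ and the compactness of the images (so both suprema are attained), the strict inequality $d_H>\delta$ yields, say, a point $a\in A$ with $d(f_1^n(a),f_1^n(b))>\delta$ for every $b\in B$. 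Picking any such $b$, the two points $a,b\in U$ witness $n\in N_{f_{1,\infty}}(U,\delta)$. Combining the inclusion with the monotonicity of upper density shows $N_{f_{1,\infty}}(U,\delta)$ has positive upper density, and since $U$ was arbitrary, $(X,f_{1,\infty})$ is ergodically sensitive.

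The main obstacle, and the only point requiring genuine care, is the extraction step: unpacking $d_H>\delta$ into two genuine points of $U$ whose $n$-th iterates are separated by strictly more than $\delta$. The subtlety is that a naive reading of the Hausdorff condition gives only a non-strict bound; this is resolved by invoking compactness of $f_1^n(A)$ and $f_1^n(B)$, which guarantees the relevant supremum, and hence the separation, is actually attained. Everything else is the routine monotonicity of upper density, so no density-preserving construction of the kind used in the forward direction of Theorem 3.1 (with Lemma 2.1 and the auxiliary set $C$) is needed here.
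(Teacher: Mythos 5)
Your argument is correct and follows essentially the same route as the paper's: reduce to a singleton $\{x\}$ whose $d_H$-ball sits inside $\langle U\rangle$-type data, unpack the Hausdorff separation $d_H(\overline{f_1^n}(A),\overline{f_1^n}(B))>\delta$ into a pair of points of $U$ whose $n$-th iterates are $\delta$-separated, deduce the inclusion $N_{\overline{f_{1,\infty}}}(B_{d_H}(\{x\},\epsilon),\delta)\subseteq N_{f_{1,\infty}}(U,\delta)$, and invoke monotonicity of upper density. If anything, your extraction step is more careful than the paper's (which tacitly takes one of the two compact sets to be $\{x\}$ itself); your worry about strictness is already settled by the fact that a supremum exceeding $\delta$ has a witness exceeding $\delta$, so the attainment-by-compactness remark is not actually needed.
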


\begin{proof}
Assume that $(\mathcal{K}$($X),\overline{f}_{1,\infty}$) is ergodically sensitive with constant of ergodic sensitivity $\delta>0$. For any $\epsilon>0$, let $x\in X$ and $U=B_d(x,\epsilon)$ be the $\epsilon$- neighbourhood of $x$ in $X$. Since $B_{d_H}(\{x\},\epsilon)$ is the $\epsilon$- neighbourhood of $\{x\}$ in $\mathcal{K}$($X)$ and we know that $\overline{f_{1}^{\infty}}$ is ergodically sensitive, therefore $N_{\overline{f_{1,\infty}}}[{B_{d_H}(\{x\},\epsilon),\delta)}]$ has positive upper density and hence there exist $A\in B_{d_H}(\{x\},\epsilon)$ and $n\geq 0$ such that $d_{H}(\overline{f_{1}^{n}}(\{x\}),\overline{f_{1}^{n}}(A))>\delta$.
\\Therefore, there exists $y\in A \subset U$ such that $d(f_{1}^{n}(x),f_{1}^{n}(y))>\delta$ which implies $N_{\overline{f_{1,\infty}}}$ $[\cup{B_{d_H}(\{x\},\epsilon),\delta)};x\in U] \subset N_{f_{1,\infty}}(U,\delta)$. As $N_{\overline{f_{1,\infty}}}[\cup{B_{d_H}(\{x\},\epsilon),\delta)};x\in U]$ has positive upper density, therefore $N_{f_{1,\infty}}(U,\delta)$ has positive upper density. Hence, $(X,f_{1,\infty})$ is ergodically sensitive.
\end{proof}

\begin{thm}
Let $(X,f_{1,\infty})$ and $(Y,g_{1,\infty})$ be two dynamical systems. The system $(X\times Y,f_{1,\infty}\times g_{1,\infty})$ is ergodically sensitive if and only if $(X,f_{1,\infty})$ or $(Y,g_{1,\infty})$ is ergodically sensitive.
\end{thm}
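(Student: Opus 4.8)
The plan is to prove both implications by directly manipulating the sets $N_{f_{1,\infty}\times g_{1,\infty}}(W,\delta)$ through the additive structure of the product metric $d_{X\times Y}$, together with two elementary facts about upper density: it is monotone (if $A\subseteq B$ then the upper density of $A$ is at most that of $B$) and finitely subadditive (the upper density of $A\cup B$ is at most the sum of the two upper densities, so a union of two sets of upper density zero again has upper density zero). Throughout I will use that $h_1^n(x,y)=(f_1^n(x),g_1^n(y))$, whence $d_{X\times Y}(h_1^n(x,y),h_1^n(x',y'))=d_X(f_1^n(x),f_1^n(x'))+d_Y(g_1^n(y),g_1^n(y'))$.

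For the sufficiency direction, I would assume without loss of generality that $(X,f_{1,\infty})$ is ergodically sensitive with constant $\delta>0$. Given a non-empty open $W\subseteq X\times Y$, choose a basic box $U\times V\subseteq W$ with $U,V$ non-empty open. If $n\in N_{f_{1,\infty}}(U,\delta)$, pick $x,x'\in U$ with $d_X(f_1^n(x),f_1^n(x'))>\delta$ and any $y\in V$; then $(x,y),(x',y)\in U\times V$ witness $n\in N_{f_{1,\infty}\times g_{1,\infty}}(U\times V,\delta)$, since the $Y$-coordinate contributes $0$ to the product distance. Hence $N_{f_{1,\infty}}(U,\delta)\subseteq N_{f_{1,\infty}\times g_{1,\infty}}(W,\delta)$, and by monotonicity of upper density the right-hand set has positive upper density. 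Thus the product is ergodically sensitive with the same constant $\delta$; the case in which $(Y,g_{1,\infty})$ is sensitive is symmetric.

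For the necessity direction I would argue by contraposition. Assume the product is ergodically sensitive with constant $\delta>0$ and suppose, toward a contradiction, that neither factor is ergodically sensitive. The decisive inclusion comes from splitting the product metric: if $n\in N_{f_{1,\infty}\times g_{1,\infty}}(U\times V,\delta)$ is witnessed by $(x,y),(x',y')\in U\times V$, then $d_X(f_1^n(x),f_1^n(x'))+d_Y(g_1^n(y),g_1^n(y'))>\delta$ forces at least one summand to exceed $\delta/2$, giving
$$N_{f_{1,\infty}\times g_{1,\infty}}(U\times V,\delta)\subseteq N_{f_{1,\infty}}(U,\delta/2)\cup N_{g_{1,\infty}}(V,\delta/2).$$
Since $(X,f_{1,\infty})$ is not ergodically sensitive, applying the definition at the level $\delta/2$ produces a non-empty open $U_0\subseteq X$ with $N_{f_{1,\infty}}(U_0,\delta/2)$ of upper density zero; likewise there is a non-empty open $V_0\subseteq Y$ with $N_{g_{1,\infty}}(V_0,\delta/2)$ of upper density zero. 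Taking $W=U_0\times V_0$, the displayed inclusion and finite subadditivity of upper density force $N_{f_{1,\infty}\times g_{1,\infty}}(U_0\times V_0,\delta)$ to have upper density zero, contradicting ergodic sensitivity of the product. Hence at least one factor is ergodically sensitive.

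The main obstacle is this necessity direction, specifically the worry that which factor ``carries'' the sensitivity might vary with the chosen open box, so that no single factor is uniformly ergodically sensitive. The contrapositive formulation sidesteps the difficulty: instead of selecting a good factor for each box, I only need one bad box in each factor simultaneously, and the additive metric makes their product a bad box for the whole system. The point demanding care is the density bookkeeping — confirming that ``not ergodically sensitive'' delivers a density-zero witness at the specific level $\delta/2$, and that the $\limsup$ defining upper density is subadditive so that the union of the two density-zero witness sets remains density zero.
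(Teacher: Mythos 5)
Your proposal is correct and follows essentially the same route as the paper: sufficiency via the inclusion $N_{f_{1,\infty}}(U,\delta)\subseteq N_{f_{1,\infty}\times g_{1,\infty}}(U\times V,\delta)$ and monotonicity of upper density, and necessity by contradiction using the splitting inclusion into $N_{f_{1,\infty}}(U,\cdot)\cup N_{g_{1,\infty}}(V,\cdot)$ together with subadditivity of upper density. The only differences are cosmetic — you split at level $\delta/2$ where the paper uses $\delta/3$, and you are slightly more careful in reducing a general open set $W$ to a basic box.
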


\begin{proof}
Suppose $(X,f_{1,\infty})$ is ergodically sensitive with constant of ergodic sensitivity $\delta>0$. Let $U\times V$ be a non-empty open set in $X\times Y$. Then, $U$ is a non-empty open set in $X$, so by ergodic sensitivity of $(X,f_{1,\infty})$, we have that $N_{f_{1,\infty}}(U,\delta)$ has positive upper density. Since $N_{f_{1,\infty}}(U,\delta)\cup N_{g_{1,\infty}}(V,\delta)\subset N_{f_{1,\infty}\times g_{1,\infty}}(U\times V,\delta)$ therefore $N_{f_{1,\infty}\times g_{1,\infty}}(U\times V,\delta)$ also has positive upper density. Thus, $(X\times Y,f_{1,\infty}\times g_{1,\infty})$ is ergodically sensitive. 
Similarly the result holds when $(Y,g_{1,\infty})$ is ergodic sensitive.

Conversely, suppose that $(X\times Y,f_{1,\infty}\times g_{1,\infty})$ is ergodically sensitive with constanst of ergodic sensitivity $\delta>0$. Let us assume that both $f_{1,\infty}$ and $g_{1,\infty}$ are not ergodically sensitive which implies that for any $\epsilon>0$, there exists an open set $U\subset X$ such that $\overline{d}(N_{f_{1,\infty}}(U,\epsilon))=0$ and there exists an open set $V\subset Y$ such that $\overline{d}(N_{g_{1,\infty}}(V,\epsilon))=0$. Thus, for $\epsilon=\delta/3$, there exist $U'\subset X$ and $V'\subset Y$ such that $\overline{d}(N_{f_{1,\infty}}(U',\delta/3))=0$ and $\overline{d}(N_{g_{1,\infty}}(V',\delta/3))=0$. Clearly, $N_{f_{1,\infty}\times g_{1,\infty}}(U'\times V',\delta)\subset N_{f_{1,\infty}}(U',\delta/3)\cup N_{g_{1,\infty}}(V',\delta/3)$. Therefore, \begin{align}
\overline{d}(N_{f_{1,\infty}\times g_{1,\infty}}(U'\times V',\delta))\leq \overline{d}(N_{f_{1,\infty}}(U',\delta/3)\cup N_{g_{1,\infty}}(V',\delta/3))\nonumber\end{align} \begin{align}\leq \overline{d}(N_{f_{1,\infty}}(U',\delta/3))+\overline{d}(N_{g_{1,\infty}}(V',\delta/3))=0
\nonumber\end{align}
which contradicts the ergodic sensitivity of $(X\times Y,f_{1,\infty}\times g_{1,\infty})$ and hence we have that $(X,f_{1,\infty})$ or $(Y,g_{1,\infty})$ is ergodically sensitive.
\end{proof}

From Theorem 3.5 and Theorem 3.6, we get that
\begin{Cor}
Let $(X,f_{1,\infty})$ be a dynamical system. If $(\mathcal{K}$($X\times Y),\overline{(f\times g)}_{1,\infty}$) is ergodically sensitive, then $(X,f_{1,\infty})$ or $(Y,g_{1,\infty})$ is ergodically sensitive.
\end{Cor}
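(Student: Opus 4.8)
The plan is to obtain this corollary purely by composing the two results that immediately precede it, so no new estimates are needed; the only work is to set up the bookkeeping so that Theorem 3.5 can be applied to the product system as a single non-autonomous system in its own right.

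First I would observe that, by the construction recalled in the preliminaries, $X\times Y$ is a compact metric space under the product metric $d_{X\times Y}$, and the sequence $h_{1,\infty}=f_{1,\infty}\times g_{1,\infty}=(h_1,h_2,\ldots)$ with $h_n=f_n\times g_n$ makes $(X\times Y,\,f_{1,\infty}\times g_{1,\infty})$ a bona fide non-autonomous discrete dynamical system on a compact metric space. Consequently its induced hyperspace system is well defined, and by the identification $\overline{h}_1^{\,n}=\overline{h_1^{\,n}}$ the induced system is exactly $(\mathcal{K}(X\times Y),\overline{(f\times g)}_{1,\infty})$. This is the notational check that must be made explicit, since Theorem 3.5 is stated for a generic system $(Z,h_{1,\infty})$ and its induced system $(\mathcal{K}(Z),\overline{h}_{1,\infty})$, and here we are taking $Z=X\times Y$.

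With that identification in place, I would apply Theorem 3.5 with $Z=X\times Y$ and $h_{1,\infty}=f_{1,\infty}\times g_{1,\infty}$: the hypothesis that $(\mathcal{K}(X\times Y),\overline{(f\times g)}_{1,\infty})$ is ergodically sensitive then yields that the base system $(X\times Y,\,f_{1,\infty}\times g_{1,\infty})$ is ergodically sensitive. Next I would invoke the forward implication of Theorem 3.6, which asserts that ergodic sensitivity of $(X\times Y,\,f_{1,\infty}\times g_{1,\infty})$ forces $(X,f_{1,\infty})$ or $(Y,g_{1,\infty})$ to be ergodically sensitive. Chaining these two implications gives precisely the conclusion of the corollary.

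There is no genuine analytic obstacle here; the only point requiring care is the one flagged above, namely confirming that Theorem 3.5 is applicable to the product system verbatim, i.e.\ that the $\delta$-ball and upper-density machinery used in the definition of ergodic sensitivity transfers to $X\times Y$ through the product metric without modification. Once the product system is recognized as an instance of the setting of Theorem 3.5, the argument is a two-line composition, which is why I would present it exactly as the excerpt does, citing Theorem 3.5 followed by Theorem 3.6.
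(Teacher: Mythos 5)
Your proposal is correct and matches the paper exactly: the paper derives this corollary directly by combining Theorem 3.5 (applied to the system $(X\times Y, f_{1,\infty}\times g_{1,\infty})$ and its induced hyperspace system) with the relevant direction of Theorem 3.6. The extra care you take in checking that the product system fits the setting of Theorem 3.5 is sound but is left implicit in the paper.
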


\begin{thm}
Let $(X,f_{1,\infty})$ be a dynamical system. If $(\mathcal{K}$($X),\overline{f}_{1,\infty}$) is thickly sensitive or thickly syndetic sensitive, then so is $(X,f_{1,\infty})$.
\end{thm}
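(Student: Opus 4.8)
The plan is to mirror the proof of Theorem 3.5 verbatim, since the only feature of ``positive upper density'' exploited there is that the family of subsets of $\mathbb{N}$ having that property is closed under passing to supersets. The family of thick sets and the family of thickly syndetic sets both share this upward-closure, so a single containment of $N$-sets handles the two cases simultaneously, with no change in the sensitivity constant. Accordingly, the whole argument reduces to (i) producing a set containment and (ii) invoking monotonicity.

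First I would fix $\delta>0$ to be the constant witnessing that $(\mathcal{K}(X),\overline{f}_{1,\infty})$ is thickly sensitive (respectively thickly syndetic sensitive). Since balls form a basis and supersets of the relevant sets preserve the property, it suffices to treat a ball: given $\epsilon>0$ and $x\in X$, set $U=B_d(x,\epsilon)$ and consider the singleton $\{x\}\in\mathcal{K}(X)$ together with its neighbourhood $B_{d_H}(\{x\},\epsilon)$. By hypothesis $N_{\overline{f_{1,\infty}}}(B_{d_H}(\{x\},\epsilon),\delta)$ is thick (respectively thickly syndetic). The key step is then the containment
\[
N_{\overline{f_{1,\infty}}}(B_{d_H}(\{x\},\epsilon),\delta)\subseteq N_{f_{1,\infty}}(U,\delta).
\]
To establish it I would fix $n$ in the left-hand set, obtain $A\in B_{d_H}(\{x\},\epsilon)$ (so $A\subseteq B_d(x,\epsilon)=U$) with $d_H(\overline{f_1^{\,n}}(\{x\}),\overline{f_1^{\,n}}(A))>\delta$, and unwind the Hausdorff metric: because $\overline{f_1^{\,n}}(\{x\})=\{f_1^{\,n}(x)\}$ is a singleton, the separation $>\delta$ forces a point $y\in A\subseteq U$ with $d(f_1^{\,n}(x),f_1^{\,n}(y))>\delta$, whence $n\in N_{f_{1,\infty}}(U,\delta)$. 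This is the same set-to-point translation already used in the converse direction of Theorem 3.1 and in Theorem 3.5.

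Finally I would invoke upward-closure to transfer the largeness property across the containment. A superset of a thick set is thick, since it still contains every arbitrarily long interval; and since a superset of a syndetic set is syndetic, applying this at each level $k$ shows a superset of a thickly syndetic set is thickly syndetic. Combined with the containment above, $N_{f_{1,\infty}}(U,\delta)$ is thick (respectively thickly syndetic), and as $U$ was an arbitrary basic open set, $(X,f_{1,\infty})$ has the corresponding form of sensitivity with the same constant $\delta$. The only genuine point requiring care, and hence the main obstacle, is verifying the upward-closure of the thickly syndetic family (which needs the two-level syndeticity argument) and handling both Hausdorff-separation alternatives uniformly; everything else is a direct transcription of Theorem 3.5.
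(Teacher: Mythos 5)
Your proposal is correct and follows essentially the same route as the paper's proof: pass from $x$ and $U=B_d(x,\epsilon)$ to the singleton $\{x\}$ and its $d_H$-ball, use the Hausdorff-metric separation of a singleton from a set to extract a point $y\in A\subseteq U$ with $d(f_1^{\,n}(x),f_1^{\,n}(y))>\delta$, deduce the containment $N_{\overline{f_{1,\infty}}}(B_{d_H}(\{x\},\epsilon),\delta)\subseteq N_{f_{1,\infty}}(U,\delta)$, and conclude by upward-closure of thickness and thick syndeticity. If anything, your explicit verification that the thickly syndetic family is closed under supersets is a point the paper leaves implicit.
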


\begin{proof}
Let $(\mathcal{K}$($X),\overline{f}_{1,\infty}$) be thickly sensitive with constant of thick sensitivity $\delta>0$. For any $\epsilon>0$ and $x\in X$ let $U=B_d(x,\epsilon)$. Since $B_{d_H}(\{x\},\epsilon)$ is an $\epsilon$- neighbourhood of $\{x\}$ in $\mathcal{K}$($X)$ and we know $\overline{f_{1}^{\infty}}$ is thickly sensitive, so $N_{\overline{f_{1,\infty}}}[{B_{d_H}(\{x\},\epsilon),\delta)}]$ is thick therefore there exist $A\in B_{d_H}(\{x\},\epsilon)$ and $n\geq 0$ such that $d_{H}(\overline{f_{1}^{n}}(\{x\}),\overline{f_{1}^{n}}(A))>\delta$.
\\Hence, there exists $y\in A \subset U$ such that $d(f_{1}^{n}(x),f_{1}^{n}(y))>\delta$ which implies $N_{\overline{f_{1,\infty}}}$ $[\cup{B_{d_H}(\{x\},\epsilon),\delta)};x\in U] \subset N_{f_{1,\infty}}(U,\delta)$. As $N_{\overline{f_{1,\infty}}}[\cup{B_{d_H}(\{x\},\epsilon),\delta)};x\in U]$ is thick, therefore $N_{f_{1,\infty}}(U,\delta)$ is thick. Hence, $(X,f_{1,\infty})$ is thickly sensitive.
\\Similarly, one can prove when $(\mathcal{K}$($X),\overline{f}_{1,\infty}$) is thickly syndetic sensitive. 
\end{proof}

\begin{thm}
Let $(X,f_{1,\infty})$ and $(Y,g_{1,\infty})$ be two dynamical systems. If $(X,f_{1,\infty})$ or $(Y,g_{1,\infty})$ is thick sensitive or thickly syndetic sensitive, then so is $(X\times Y,f_{1,\infty}\times g_{1,\infty})$.
\end{thm}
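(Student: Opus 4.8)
The plan is to mirror the forward implication of Theorem 3.6, since thickness and thick syndeticity, like positive upper density, are monotone under passing to supersets. The only analytic input needed is the set-theoretic containment used repeatedly in the product theorems above, namely
\[
N_{f_{1,\infty}}(U,\delta)\cup N_{g_{1,\infty}}(V,\delta)\subset N_{f_{1,\infty}\times g_{1,\infty}}(U\times V,\delta),
\]
which follows from the fact that $h_1^n=f_1^n\times g_1^n$ and that the product metric splits as a sum: if $x,x'\in U$ separate by more than $\delta$ under $f_1^n$, then the pairs $(x,y),(x',y)$ for any $y\in V$ separate by more than $\delta$ under $h_1^n$, and symmetrically for the $Y$-factor.

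First I would suppose, without loss of generality, that $(X,f_{1,\infty})$ is thickly sensitive with constant $\delta>0$ (the case of $(Y,g_{1,\infty})$ being identical). Given a non-empty open set in $X\times Y$, I would pass to a basic open box $U\times V$ contained in it, with $U,V$ non-empty open in $X,Y$ respectively; since enlarging the open set only enlarges the associated $N$-set, it suffices to treat boxes. By thick sensitivity of $(X,f_{1,\infty})$ the set $N_{f_{1,\infty}}(U,\delta)$ is thick, and by the displayed containment it sits inside $N_{f_{1,\infty}\times g_{1,\infty}}(U\times V,\delta)$.

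The concluding step is the observation that a superset of a thick set is thick, and a superset of a thickly syndetic set is thickly syndetic. For thickness this is immediate, since the larger set still contains the arbitrarily long intervals furnished by the smaller one. For thick syndeticity I would note that if $F\subset G$ then $\{n:n+j\in F,\ 0\leq j\leq k\}\subset\{n:n+j\in G,\ 0\leq j\leq k\}$ for every $k$, and that a superset of a syndetic set is syndetic (the same bounding constant $a$ works); hence thick syndeticity of $F$ forces thick syndeticity of $G$. Applying this with $F=N_{f_{1,\infty}}(U,\delta)$ and $G=N_{f_{1,\infty}\times g_{1,\infty}}(U\times V,\delta)$ shows the latter is thick (resp. thickly syndetic), so $(X\times Y,f_{1,\infty}\times g_{1,\infty})$ is thick sensitive (resp. thickly syndetic sensitive) with the same constant $\delta$.

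I do not expect any genuine obstacle here: unlike the ergodic case in Theorem 3.6, no converse is claimed, so I never need the reverse containment or a density-additivity estimate. The entire content is the union-containment together with the trivial monotonicity of the two largeness notions under supersets; the only mild care required is in the thickly syndetic case, where one must unwind the definition to confirm that the syndeticity of each auxiliary set $\{n:n+j\in F,\ 0\leq j\leq k\}$ is preserved under enlargement.
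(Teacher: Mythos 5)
Your proposal is correct and follows essentially the same route as the paper's proof: fix the sensitivity constant of the sensitive factor, use the containment $N_{f_{1,\infty}}(U,\delta)\cup N_{g_{1,\infty}}(V,\delta)\subset N_{f_{1,\infty}\times g_{1,\infty}}(U\times V,\delta)$, and conclude by monotonicity of thickness and thick syndeticity under supersets. The only difference is that you spell out the reduction to basic open boxes and the verification that these largeness notions are preserved under enlargement, details the paper leaves implicit.
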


\begin{proof}
Suppose $(X,f_{1,\infty})$ is thick sensitive with constant of thick sensitivity $\delta>0$. Let $U\times V$ be a non-empty open set in $X\times Y$. Then, $U$ is a non-empty open set in $X$, therefore by thick sensitivity of $(X,f_{1,\infty})$, we have that $N_{f_{1,\infty}}(U,\delta)$ is thick. Since $N_{f_{1,\infty}}(U,\delta)\cup N_{g_{1,\infty}}(V,\delta)\subset N_{f_{1,\infty}\times g_{1,\infty}}(U\times V,\delta)$, therefore, $N_{f_{1,\infty}\times g_{1,\infty}}(U\times V,\delta)$ is also thick. Thus, $(X\times Y,f_{1,\infty}\times g_{1,\infty})$ is thick sensitive. 
Similarly, the result holds when $(Y,g_{1,\infty})$ is thick sensitive.
\\By similar arguments, one can prove for $(X,f_{1,\infty})$ or $(Y,g_{1,\infty})$ being thickly syndetic sensitive.
\end{proof}

In \cite{MR3584171}, authors have proved results relating the transitivity of the non-autonomous system $(X,f_{1,\infty})$ and of its induced hyperspace $(\mathcal{K}(X),\overline{f_{1,\infty}})$. In the following results, we prove such relations for stronger forms of transitivity.

\begin{thm}
Let $(X,f_{1,\infty})$ be a dynamical system. If $(\mathcal{K}$($X),\overline{f}_{1,\infty}$) is syndetic transitive, then so is $(X,f_{1,\infty})$.
\end{thm}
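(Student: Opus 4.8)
The plan is to transfer the syndetic transitivity of the hyperspace system back to the base system by realising open sets of $X$ as Vietoris-basic open sets of $\mathcal{K}(X)$. First I would fix two arbitrary non-empty open sets $U_0$ and $V_0$ of $X$ and pass to the associated basic open sets $\langle U_0 \rangle = \{K \in \mathcal{K}(X) : K \subset U_0\}$ and $\langle V_0 \rangle = \{K \in \mathcal{K}(X) : K \subset V_0\}$ in the Vietoris topology; both are open, and both are non-empty since they contain the singletons $\{x\}$ for $x \in U_0$, respectively $x \in V_0$. Applying the hypothesis that $(\mathcal{K}(X),\overline{f}_{1,\infty})$ is syndetic transitive to this pair yields that $N_{\overline{f_{1,\infty}}}(\langle U_0 \rangle, \langle V_0 \rangle)$ is syndetic.

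The key step is the inclusion
\[
N_{\overline{f_{1,\infty}}}(\langle U_0 \rangle, \langle V_0 \rangle) \subseteq N_{f_{1,\infty}}(U_0, V_0).
\]
To see this, suppose $n$ lies in the left-hand set, so that $\overline{f_1^n}(\langle U_0 \rangle) \cap \langle V_0 \rangle \ne \emptyset$. Then there is some $K \in \langle U_0 \rangle$ with $\overline{f_1^n}(K) \in \langle V_0 \rangle$; using $\overline{f_1^n}(K) = f_1^n(K)$, this means $K \subset U_0$ and $f_1^n(K) \subset V_0$. Choosing any $x \in K$ gives $x \in U_0$ together with $f_1^n(x) \in f_1^n(K) \subset V_0$, whence $f_1^n(U_0) \cap V_0 \ne \emptyset$, i.e. $n \in N_{f_{1,\infty}}(U_0, V_0)$.

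Finally I would invoke the elementary fact that any superset of a syndetic set is again syndetic: if $F$ is syndetic with gap bound $a$ and $F \subseteq G$, then $\{i, i+1, \ldots, i+a\} \cap G \supseteq \{i, i+1, \ldots, i+a\} \cap F \ne \emptyset$ for every $i$, so $G$ is syndetic with the same bound $a$. Combining this with the inclusion above and the syndeticity of $N_{\overline{f_{1,\infty}}}(\langle U_0 \rangle, \langle V_0 \rangle)$ gives that $N_{f_{1,\infty}}(U_0, V_0)$ is syndetic. Since $U_0$ and $V_0$ were arbitrary, $(X, f_{1,\infty})$ is syndetic transitive.

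I expect no serious obstacle here; the argument is a direct transfer through the Vietoris structure, and the only points requiring care are checking that $\langle U_0 \rangle$ and $\langle V_0 \rangle$ are genuinely non-empty and open (so that the hypothesis applies) and verifying the set inclusion, both of which are routine. The superset-of-syndetic observation is the mechanism that lets a one-sided inclusion suffice, which explains why only the implication from the hyperspace to the base is proved and no reverse inclusion is needed.
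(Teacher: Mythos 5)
Your proposal is correct and follows essentially the same route as the paper: pass to the Vietoris-basic open sets $\langle U_0\rangle$ and $\langle V_0\rangle$, establish the inclusion $N_{\overline{f_{1,\infty}}}(\langle U_0\rangle,\langle V_0\rangle)\subseteq N_{f_{1,\infty}}(U_0,V_0)$, and conclude via the fact that supersets of syndetic sets are syndetic. You spell out the non-emptiness of the basic open sets and the superset observation more explicitly than the paper does, but the argument is the same.
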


\begin{proof}
Let $U$ and $V$ be two non-empty open sets in $X$, then $\mathcal{U}$=$<U>$ and $\mathcal{V}$=$<V>$ are non-empty open sets in  $(\mathcal{K}$($X)$). Since $(\mathcal{K}$($X),\overline{f}_{1,\infty}$) is syndetic transitive, therefore $N_{\overline{f_{1}^{\infty}}}(\mathcal{U},\mathcal{V})$ is syndetic. Let $n\in N_{\overline{f_{1}^{\infty}}}(\mathcal{U},\mathcal{V})$, so $\overline{f_{1}^{n}}(\mathcal{U},\mathcal{V})$ is non-empty. Then, there exists $K\in\mathcal{U}$ such that $\overline{f_{1}^{n}}(K)\in\mathcal{V}$ which implies there exists $x\in K\subset U$ such that $f_{1}^{n}(x)\in V$. Therefore, we have $n\in N_{f_{1,\infty}}(U,V)$ and hence $N_{\overline{f_{1}^{\infty}}}(\mathcal{U},\mathcal{V}$)$\subseteq N_{f_{1,\infty}}(U,V)$. Since $N_{\overline{f_{1}^{\infty}}}(\mathcal{U},\mathcal{V})$ is syndetic, therefore $N_{f_{1,\infty}}(U,V)$ is syndetic. Hence, $(X,f_{1,\infty})$ is syndetic transitive.
\end{proof}

\begin{thm}
Let $(X,f_{1,\infty})$ be a dynamical system. If $(\mathcal{K}$($X),\overline{f}_{1,\infty}$) is topologically ergodic, then so is $(X,f_{1,\infty})$.
\end{thm}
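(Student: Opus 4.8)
The plan is to follow the template of Theorem 3.9 verbatim, replacing ``syndetic'' by ``positive upper density'' throughout. First I would fix two non-empty open sets $U$ and $V$ in $X$ and lift them to the hyperspace via the single-set Vietoris basic open sets $\mathcal{U}=<U>$ and $\mathcal{V}=<V>$, which are non-empty open subsets of $\mathcal{K}(X)$. Since $(\mathcal{K}(X),\overline{f}_{1,\infty})$ is assumed topologically ergodic, the set $N_{\overline{f_{1}^{\infty}}}(\mathcal{U},\mathcal{V})$ has positive upper density.

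Next I would establish the inclusion $N_{\overline{f_{1}^{\infty}}}(\mathcal{U},\mathcal{V})\subseteq N_{f_{1,\infty}}(U,V)$, which is exactly the containment used in Theorem 3.9. If $n\in N_{\overline{f_{1}^{\infty}}}(\mathcal{U},\mathcal{V})$, then $\overline{f_{1}^{n}}(\mathcal{U})\cap\mathcal{V}\ne\emptyset$, so there is a compact set $K\in\mathcal{U}$ with $\overline{f_{1}^{n}}(K)\in\mathcal{V}$. Unwinding the meaning of these neighbourhoods, $K\subseteq U$ and $f_{1}^{n}(K)\subseteq V$, so any $x\in K\subseteq U$ satisfies $f_{1}^{n}(x)\in V$, giving $f_{1}^{n}(U)\cap V\ne\emptyset$, i.e. $n\in N_{f_{1,\infty}}(U,V)$.

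The final step is to transfer the density bound across this inclusion, and this is the only point needing a word of justification: upper density is monotone under set inclusion. Concretely, for every $n$ one has $|N_{\overline{f_{1}^{\infty}}}(\mathcal{U},\mathcal{V})\cap N_n|\leq |N_{f_{1,\infty}}(U,V)\cap N_n|$, so dividing by $n$ and passing to $\limsup$ preserves the inequality; hence positivity of the upper density of the smaller set forces positivity of the upper density of $N_{f_{1,\infty}}(U,V)$. Since $U$ and $V$ were arbitrary non-empty open sets, $(X,f_{1,\infty})$ is topologically ergodic. I expect no real obstacle here: the inclusion is identical to that in the preceding theorem, and the monotonicity of $\limsup$-density under containment is elementary and requires no additional hypothesis.
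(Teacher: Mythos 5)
Your proposal is correct and follows essentially the same route as the paper's own proof: lifting $U,V$ to the Vietoris basic open sets $<U>,<V>$, establishing the inclusion $N_{\overline{f_{1}^{\infty}}}(\mathcal{U},\mathcal{V})\subseteq N_{f_{1,\infty}}(U,V)$ exactly as in the syndetic-transitivity case, and transferring positive upper density across the inclusion. The only difference is that you spell out the monotonicity of upper density under set containment, which the paper leaves implicit; this is a harmless (indeed welcome) addition.
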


\begin{proof}
Let $U$ and $V$ be two non-empty open sets in $X$, then $\mathcal{U}$=$<U>$ and $\mathcal{V}$=$<V>$ are non-empty open sets in  $(\mathcal{K}$($X)$). Since $(\mathcal{K}$($X),\overline{f}_{1,\infty}$) is topologically ergodic, therefore $N_{\overline{f_{1}^{\infty}}}(\mathcal{U},\mathcal{V})$ has positive upper density. Let $n\in N_{\overline{f_{1}^{\infty}}}(\mathcal{U},\mathcal{V})$, then $\overline{f_{1}^{n}}(\mathcal{U},\mathcal{V})$ is non-empty. Therefore, there exists $K\in\mathcal{U}$ such that $\overline{f_{1}^{n}}(K)\in\mathcal{V}$ which implies there exists $x\in K\subset U$ such that $f_{1}^{n}(x)\in V$. Hence, we have $n\in N_{f_{1,\infty}}(U,V)$ implying $N_{\overline{f_{1}^{\infty}}}(\mathcal{U},\mathcal{V}$)$\subseteq N_{f_{1,\infty}}(U,V)$. Since $N_{\overline{f_{1}^{\infty}}}(\mathcal{U},\mathcal{V})$ has positive upper density, therefore $N_{f_{1,\infty}}(U,V)$ has positive upper density. Hence, $(X,f_{1,\infty})$ is topologically ergodic.
\end{proof}

\section{On Shadowing Property}
In this section, we obtain relation between the shadowing property of the non-autonomous dynamical system $(X,f_{1,\infty})$ and its induced system $(\mathcal{K}(X),\overline{f_{1,\infty}})$.

\begin{thm}
Let $(X,f_{1,\infty})$ be a dynamical system. If $(\mathcal{K}$($X),\overline{f}_{1,\infty}$) has shadowing property, then $(X,f_{1,\infty}$) also has shadowing property.
\end{thm}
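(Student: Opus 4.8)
The plan is to exploit the canonical embedding $x \mapsto \{x\}$ of $X$ into $\mathcal{K}(X)$, which is isometric since $d_H(\{x\},\{y\}) = d(x,y)$ and which respects the dynamics because $\overline{f}_1^n(\{x\}) = \{f_1^n(x)\}$ for every $n$. The idea is that any pseudo-orbit in $X$ lifts to a pseudo-orbit of singletons in $\mathcal{K}(X)$, and that a shadowing point furnished by the hyperspace then pushes back down to a shadowing point in $X$. This is the easy direction of the correspondence, so I expect the argument to be short.

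First I would fix $\epsilon > 0$ and use the shadowing property of $(\mathcal{K}(X),\overline{f}_{1,\infty})$ to obtain $\delta = \delta(\epsilon) > 0$ such that every $\delta$-pseudo orbit in $\mathcal{K}(X)$ is $\epsilon$-shadowed. Next I would take an arbitrary $\delta$-pseudo orbit $\{x_0, x_1, x_2, \ldots\} \subseteq X$ of $(X,f_{1,\infty})$, so that $d(f_i(x_{i-1}), x_i) < \delta$ for all $i \geq 1$, and form the associated singleton sequence $\{\{x_0\}, \{x_1\}, \{x_2\}, \ldots\}$ in $\mathcal{K}(X)$. Using $\overline{f}_i(\{x_{i-1}\}) = \{f_i(x_{i-1})\}$ together with the isometry of singletons, I would verify $d_H(\overline{f}_i(\{x_{i-1}\}), \{x_i\}) = d(f_i(x_{i-1}), x_i) < \delta$, so the singleton sequence is a genuine $\delta$-pseudo orbit in $\mathcal{K}(X)$.

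The key step is then to read a single point of $X$ off the shadowing compact set. Applying the hyperspace shadowing property yields $K \in \mathcal{K}(X)$ with $d_H(\overline{f}_1^i(K), \{x_i\}) < \epsilon$ for all $i \geq 0$ (with $\overline{f}_1^0(K) = K$). Since $\overline{f}_1^i(K) = f_1^i(K)$, the one inclusion contained in the Hausdorff estimate, namely $f_1^i(K) \subseteq B_d(x_i, \epsilon)$, already says that $d(f_1^i(k), x_i) < \epsilon$ for every $k \in K$ and every $i \geq 0$. As $K$ is non-empty, I would choose any $y \in K$; this single point satisfies $d(f_1^i(y), x_i) < \epsilon$ for all $i \geq 0$, so $y$ $\epsilon$-shadows the original pseudo-orbit, and $(X,f_{1,\infty})$ has the shadowing property.

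I do not anticipate a genuine obstacle in this direction: the whole argument rests on the two elementary facts that the singleton embedding is isometric and dynamics-preserving, and that Hausdorff-closeness of a compact set to a singleton $\{x_i\}$ simultaneously controls each of its points (so that only half of the Hausdorff condition is needed). The delicate direction would be the converse—deducing hyperspace shadowing from base shadowing—which this embedding does not deliver and which is not claimed here.
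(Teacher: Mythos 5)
Your proposal is correct and follows essentially the same route as the paper: lift the pseudo-orbit to singletons, apply hyperspace shadowing to get a compact set $A$, and use the fact that $d_H(f_1^i(A),\{x_i\})<\epsilon$ forces every point of $A$ (hence any chosen one) to $\epsilon$-trace the orbit. Your write-up is in fact slightly more careful than the paper's, since you note explicitly that only the inclusion $f_1^i(A)\subseteq B_d(x_i,\epsilon)$ is needed and that the singleton embedding is an isometry.
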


\begin{proof}
Suppose $(\mathcal{K}$($X),\overline{f}_{1,\infty}$) has shadowing property. So, every $\delta$-pseudo orbit in $\mathcal{K}(X)$ is $\epsilon$-traced. We need to show that $(X,f_{1,\infty}$) has shadowing property. Let $\gamma=\{x_0,x_1,x_2,\ldots \}$ be a $\delta$-pseudo orbit in $X$. Then $\gamma^*=\{\{x_0\},\{x_1\},\{x_2\},\ldots \}$ is a $\delta$-pseudo orbit in $(\mathcal{K}$($X))$ and therefore by shadowing property of $(\mathcal{K}$($X),\overline{f}_{1,\infty}$), there exists a point $A\in (\mathcal{K}$($X))$ which $\epsilon$-shadows $\gamma^*$, i.e, $d_H(\overline{f}_{0}^{i}(A),\{x_i\})<\epsilon$, for each $i\geq0$. Hence, by definition of Hausdorff metric, we have $d(f_{0}^{i}(a),x_i)<\epsilon$, for each $a\in A$ and for each $i\geq0$. Thus, $\gamma$ is $\epsilon$-shadowed implying that $(X,f_{1,\infty}$) has shadowing property. 
\end{proof}

\begin{lem}
Let $(X,f_{1,\infty})$ be a dynamical system and Y be a dense subset of $X$ such that $Y$ is $f_{n}$-invariant for each $n\geq 1$. If $(Y,f_{1,\infty})$ has finite-shadowing property, then so does $(X,f_{1,\infty})$.
\end{lem}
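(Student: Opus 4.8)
The plan is to transfer a finite pseudo orbit of $(X,f_{1,\infty})$ to a nearby finite pseudo orbit lying entirely in $Y$, to apply the finite-shadowing property of $(Y,f_{1,\infty})$ so as to obtain a tracing point $y\in Y\subseteq X$, and then to invoke the triangle inequality to show that the same $y$ traces the original pseudo orbit in $X$. The hypothesis that $Y$ is $f_n$-invariant for each $n$ is precisely what makes $(Y,f_{1,\infty})$ a genuine subsystem, so that its finite-shadowing property is meaningful and the shadowing orbit $\{f_0^i(y)\}_{i\geq0}$ remains in $Y$.

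First I would fix $\epsilon>0$ and let $\delta'=\delta'(\epsilon/2)>0$ be the constant furnished by the finite-shadowing property of $(Y,f_{1,\infty})$ for the tolerance $\epsilon/2$. I would then declare $\delta=\delta'/2$ to be the required constant for $(X,f_{1,\infty})$; it is crucial to observe that $\delta$ depends only on $\epsilon$, and not on the length of the pseudo orbit. Now let $\{x_0,x_1,\ldots,x_n\}\subseteq X$ be an arbitrary finite $\delta$-pseudo orbit. Since $X$ is compact, each of the finitely many maps $f_1,\ldots,f_n$ occurring here is uniformly continuous; let $\gamma_i>0$ be a modulus of uniform continuity for $f_i$ at the tolerance $\delta/2$. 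Using the density of $Y$, for each $i$ I would pick $y_i\in Y$ with $d(x_i,y_i)<\min\{\epsilon/2,\ \delta/2,\ \gamma_{i+1}\}$, where the last constraint is simply omitted when $i=n$.

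With these choices, for each $1\leq i\leq n$ the triangle inequality gives
\[
d(f_i(y_{i-1}),y_i)\le d(f_i(y_{i-1}),f_i(x_{i-1}))+d(f_i(x_{i-1}),x_i)+d(x_i,y_i)<\frac{\delta}{2}+\delta+\frac{\delta}{2}=2\delta=\delta',
\]
so $\{y_0,y_1,\ldots,y_n\}$ is a $\delta'$-pseudo orbit contained in $Y$. Applying the finite-shadowing property of $(Y,f_{1,\infty})$ yields a point $y\in Y$ with $d(f_0^i(y),y_i)<\epsilon/2$ for all $0\le i\le n$. Then, for each such $i$, $d(f_0^i(y),x_i)\le d(f_0^i(y),y_i)+d(y_i,x_i)<\epsilon/2+\epsilon/2=\epsilon$, and since $y\in Y\subseteq X$ this shows that $\{x_0,\ldots,x_n\}$ is $\epsilon$-traced in $X$, as desired.

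The step I expect to be the main obstacle is verifying that $\delta$ can indeed be chosen independently of the length $n$. The accuracy required of the approximation $y_i\approx x_i$ does depend on $n$, through the moduli $\gamma_1,\ldots,\gamma_n$, but this causes no difficulty: the density of $Y$ permits an approximation of any prescribed accuracy for each individual pseudo orbit, while the threshold $\delta=\delta'/2$ stays fixed. One should also check carefully that the choices of the $y_i$ are independent of one another -- each $y_i$ need only be close to $x_i$ -- so that no circular dependence arises in the construction, and that the invariance of $Y$ guarantees that the tracing point together with its entire orbit lies in $Y$, as is needed for the finite-shadowing hypothesis on $(Y,f_{1,\infty})$ to apply.
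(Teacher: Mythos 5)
Your proof is correct and follows essentially the same route as the paper's: approximate the given pseudo orbit by nearby points of the dense invariant set $Y$, use uniform continuity of the finitely many relevant maps to verify that the approximating sequence is a pseudo orbit in $Y$, apply finite-shadowing there, and finish with the triangle inequality. If anything, your choice of a separate modulus $\gamma_{i+1}$ for each map $f_{i+1}$ is slightly more careful than the paper's selection of a single $\eta$ serving all $f_n$ simultaneously, which as stated would require equicontinuity of the family rather than just uniform continuity of each member.
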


\begin{proof}
We assume that $(Y,f_{1,\infty})$ has finite-shadowing property. Let $\gamma= \{x_0, x_1, \ldots,x_k\}$ be a $\delta/3$-pseudo orbit in $X$, where $\delta$ is given by shadowing property of $(Y,f_{1,\infty})$ for $\epsilon/2$. As each $f_n$ is continuous and $X$ is compact, therefore each $f_n$ is uniformly continuous for each $n\geq1$. Thus, there exists $\eta>0$ with $\eta<\delta/3$ and $\eta<\epsilon/2$ such that whenever $d(x,y)<\eta$, $d(f_n(x),f_n(y))<\delta/3$. For each i, $0\leq i\leq k$, let $y_{i}\in B_d(x_i,\eta)\cap Y$ then $d(x_i,y_i)<\eta<\delta/3$. Clearly, $\gamma^*=\{y_o, y_1, \ldots, y_k\}$ is a finite $\delta$-pseudo orbit in $Y$. Since $(Y,f_{1,\infty})$ has finite-shadowing property, therefore there exists a point $y\in Y$ which $\epsilon/2$-shadows $\gamma^*$ which implies $d(f_{0}^{i}(y),y_i)<\epsilon/2$, for all i, $0\leq i\leq k$. Hence, $d(f_{0}^{i}(y),x_i)<d(f_{0}^{i}(y),y_i)+d(y_i,x_i)<\epsilon$. Thus, $y$ $\epsilon$-shadows $\gamma$ and we get that $(X,f_{1,\infty})$ has finite-shadowing property.
\end{proof}

\begin{lem}
Let $(X,f_{1,\infty})$ be a dynamical system. If $(X,f_{1,\infty})$ has finite-shadowing property, then it has shadowing property.
\end{lem}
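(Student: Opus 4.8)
The plan is to deduce full (infinite) shadowing from finite shadowing by a compactness argument, the standard route in the autonomous setting, which carries over once we keep track of the non-autonomous indexing. Fix $\epsilon > 0$. I would first invoke the finite-shadowing property at tolerance $\epsilon/2$ to produce a number $\delta = \delta(\epsilon/2) > 0$; the claim will be that this same $\delta$ witnesses the shadowing property at tolerance $\epsilon$. Let $\gamma = \{x_0, x_1, x_2, \ldots\}$ be an arbitrary $\delta$-pseudo orbit in $X$, and for each $n \ge 0$ consider its truncation $\gamma_n = \{x_0, x_1, \ldots, x_n\}$, which is a finite $\delta$-pseudo orbit. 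Finite-shadowing then yields, for each $n$, a point $y_n \in X$ with $d(f_0^i(y_n), x_i) < \epsilon/2$ for all $0 \le i \le n$.

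The second step uses compactness of $X$: the sequence $(y_n)$ admits a convergent subsequence $y_{n_k} \to y \in X$. I would then claim that this limit $y$ $\epsilon$-shadows the entire infinite pseudo orbit $\gamma$. Fix any index $i \ge 0$. For all $k$ large enough that $n_k \ge i$, the shadowing estimate gives $d(f_0^i(y_{n_k}), x_i) < \epsilon/2$. Since each $f_n$ is continuous, the finite composition $f_0^i$ is continuous, hence $f_0^i(y_{n_k}) \to f_0^i(y)$; passing to the limit in the previous inequality yields $d(f_0^i(y), x_i) \le \epsilon/2 < \epsilon$. As $i \ge 0$ was arbitrary, $y$ shadows $\gamma$ within $\epsilon$, so $(X, f_{1,\infty})$ has the shadowing property.

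The one place demanding care is exactly this passage to the limit: the strict inequalities $d(f_0^i(y_{n_k}), x_i) < \epsilon/2$ survive only as the non-strict bound $d(f_0^i(y), x_i) \le \epsilon/2$, so applying finite-shadowing at tolerance $\epsilon/2$ rather than $\epsilon$ is essential to recover a strict $< \epsilon$ in the conclusion. The remaining ingredients are routine: compactness of $X$ to extract the convergent subsequence, and continuity of each $f_0^i$ (a composition of finitely many continuous maps) to commute the limit with the iterate. Note that continuity is needed only for each fixed finite $i$, so no uniformity in $i$ and no diagonal argument are required.
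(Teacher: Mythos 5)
Your argument is correct and follows essentially the same route as the paper's own proof: apply finite-shadowing at tolerance $\epsilon/2$, truncate the infinite pseudo orbit, extract a convergent subsequence of the finite shadowing points by compactness, and use continuity of each finite composition $f_0^i$ to conclude that the limit point shadows the whole orbit. The only cosmetic difference is that the paper phrases the final step as a triangle inequality $d(f_0^m(y),x_m)\leq d(f_0^m(y),f_0^m(y_{n_k}))+d(f_0^m(y_{n_k}),x_m)$ rather than passing to the limit in the inequality, but the content is identical.
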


\begin{proof}
Let $\epsilon>0$ and let $\delta$ be given for $\epsilon/2$, by the finite-shadowing property of $(X,f_{1,\infty})$. Let $\{x_n\}_{n\geq0}$ be a $\delta-$pseudo orbit in $X$. For each $n\in \mathbb{N}$, there is a $y_n$ which $\epsilon/2-$ shadows $\{x_0,x_1,\ldots,x_n\}$. Then, $X$ being compact, there is a subsequence $\{y_{n_k}\}_{k\in \mathbb{N}}$ of $\{y_n\}_{n\in \mathbb{N}}$, which has a limit say $y\in X$. So, for any $m\in\mathbb{N}$, there is an $n_k>m$, such that $d(f_{0}^{m}(y_{n_k}),f_{0}^{m}(y))<\epsilon/2$. Therefore, we have $d(f_{0}^{m}(y),x_m)\leq d(f_{0}^{m}(y),f_{0}^{m}(y_{n_k}))+d(f_{0}^{m}(y_{n_k}),x_m)<\epsilon/2<\epsilon$. Hence, $(X,f_{1,\infty})$ has shadowing property.
\end{proof}

\begin{thm}
Let $(X,f_{1,\infty})$ be a dynamical system. If $(X,f_{1,\infty})$ has shadowing property, then $(\mathcal{F}(X),\overline{f_{1,\infty}})$ has finite-shadowing property.
\end{thm}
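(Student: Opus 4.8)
The plan is to fix $\epsilon>0$ and let $\delta=\delta(\epsilon)>0$ be the constant furnished by the shadowing property of $(X,f_{1,\infty})$. I first observe that shadowing of the base system yields finite shadowing of the base: any finite $\delta$-pseudo orbit $\{x_0,\dots,x_n\}$ in $X$ extends to an infinite one by setting $x_{j}=f_{j}(x_{j-1})$ for $j>n$ (these extra steps have gap $0<\delta$), and a point $\epsilon$-tracing the infinite orbit in particular also $\epsilon$-traces $\{x_0,\dots,x_n\}$. So for the same $\delta$, every finite $\delta$-pseudo orbit in $X$ is $\epsilon$-traced by a genuine point. I then take an arbitrary finite $\delta$-pseudo orbit $\{A_0,A_1,\dots,A_n\}$ in $\mathcal F(X)$, so that $d_H(\overline{f_j}(A_{j-1}),A_j)<\delta$ for $1\le j\le n$, and unpack this into the two inclusions $\overline{f_j}(A_{j-1})\subseteq B_d(A_j,\delta)$ and $A_j\subseteq B_d(\overline{f_j}(A_{j-1}),\delta)$; both will be used.

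The core of the argument is to build a genuine point pseudo orbit through every point of every set $A_m$. Fix $m$ with $0\le m\le n$ and a point $c\in A_m$; I construct a finite $\delta$-pseudo orbit $\eta_c=\{\eta_c^0,\eta_c^1,\dots,\eta_c^n\}$ in $X$ with $\eta_c^j\in A_j$ for all $j$ and $\eta_c^m=c$. Forward of $m$ I use the first inclusion: given $\eta_c^{j-1}\in A_{j-1}$, the point $f_j(\eta_c^{j-1})\in \overline{f_j}(A_{j-1})\subseteq B_d(A_j,\delta)$ lies within $\delta$ of some $\eta_c^j\in A_j$. Backward of $m$ I use the second inclusion: given $\eta_c^{j}\in A_{j}$, from $A_j\subseteq B_d(\overline{f_j}(A_{j-1}),\delta)$ there is $\eta_c^{j-1}\in A_{j-1}$ with $d(f_j(\eta_c^{j-1}),\eta_c^j)<\delta$. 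Thus $\eta_c$ is a bona fide finite $\delta$-pseudo orbit in $X$, and by the finite shadowing of $(X,f_{1,\infty})$ there is a point $b_c\in X$ with $d(f_0^i(b_c),\eta_c^i)<\epsilon$ for all $0\le i\le n$. Setting $B=\{b_c : c\in A_m,\ 0\le m\le n\}$ gives a finite subset of $X$, hence $B\in\mathcal F(X)$, being a finite union over the finitely many finite sets $A_m$.

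It then remains to verify $d_H(\overline{f_0^i}(B),A_i)<\epsilon$ for every $0\le i\le n$, using $\overline{f_0^i}(B)=f_0^i(B)$. For the inclusion $f_0^i(B)\subseteq B_d(A_i,\epsilon)$: each $b_c\in B$ satisfies $d(f_0^i(b_c),\eta_c^i)<\epsilon$ with $\eta_c^i\in A_i$, so $f_0^i(b_c)\in B_d(A_i,\epsilon)$. For the inclusion $A_i\subseteq B_d(f_0^i(B),\epsilon)$: any $c\in A_i$ is handled by its own $b_c\in B$, for which $\eta_c^i=c$ and hence $d(f_0^i(b_c),c)<\epsilon$, i.e.\ $c\in B_d(f_0^i(B),\epsilon)$. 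Both inclusions give $d_H(\overline{f_0^i}(B),A_i)<\epsilon$, so $B$ $\epsilon$-shadows $\{A_0,\dots,A_n\}$ and $(\mathcal F(X),\overline{f_{1,\infty}})$ has the finite shadowing property.

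The step I expect to require the most care is the two-sided nature of the Hausdorff metric: covering $A_i$ from the orbit of $B$ forces a pseudo orbit through every point of every $A_m$, not merely forward orbits issuing from $A_0$, and the backward extension rests precisely on the inclusion $A_j\subseteq B_d(\overline{f_j}(A_{j-1}),\delta)$. One should also double-check that finiteness of $B$ is preserved (it is) and that the single $\delta$ chosen at the outset governs all the point pseudo orbits $\eta_c$ simultaneously.
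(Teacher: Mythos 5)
Your proof is correct and follows essentially the same strategy as the paper's: thread a finite family of $\delta$-pseudo orbits in $X$ through the sets $A_i$, using one Hausdorff inclusion to extend forward and the other to extend backward, then shadow each and take the finite set of tracing points as the shadowing element of $\mathcal{F}(X)$. Your bookkeeping is in fact slightly cleaner, since you run one pseudo orbit through every point of every $A_m$ rather than starting from $A_m$ and iteratively patching uncovered points of intermediate sets as the paper does, but the underlying idea is identical.
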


\begin{proof}
Let $\epsilon>0$ and $\delta>0$ be given by shadowing property of $(X,f_{1,\infty})$. Let $\gamma=\{A_0, A_1,\ldots, A_m\}$ be a finite $\delta$-pseudo orbit in $\mathcal{F}(X)$ and assume that $|A_i|=n_i$, for each i, $0\leq i\leq m$. We will construct a family of $\delta$-pseudo orbits in $X$, denoted by $\{\gamma_j :j\leq n\}$ for some $n$, such that writing $\gamma_j=\{a_{0}^{j}, a_{1}^{j},\ldots, a_{m}^{j}\}$; we have $A_i=\{a_{i}^{j}:j\leq n\}$, for all $i\leq m$. For this, suppose that $A_m=\{a_{m}^{1}, a_{m}^{2},\ldots, a_{m}^{n_m}\}$. For each $j$ with $1\leq j\leq n_m$, we first construct a $\delta$-pseudo orbit in $X$ with $i$-th element in $A_i$ whose final element is $a_{m}^{j}$.

Since $\gamma$ is a $\delta$-pseudo orbit, we can choose $a_{m-1}^{j}\in A_{m-1}$ such that $d(f_m(a_{m-1}^{j}),a_{m}^{j})<\delta$. Again there is some $a_{m-2}^{j}\in A_{m-2}$ such that $d(f_{m-1}(a_{m-2}^{j}),a_{m-1}^{j})<\delta$. Continuing in this way, we have $\delta$-pseudo orbits $\gamma_j=\{a_{o}^{j}, a_{1}^{j},\ldots, a_{m}^{j}\}$, for each $j\leq n_m$ such that $A_m=\{a_{m}^{j}:j\leq n_m\}$ and $\{a_i^j:j\leq n_m\}\subseteq A_i$, for each $i\leq m$.
Let s = max$\{i<m: A_i \neq \{a_i^j:j\leq n_m\}\}$. If no such s exists then we are done, otherwise write $A_s -\{a_{s}^{j}:j\leq n_m\}=\{a_{s}^{j}:n_m<j<n_s'\}$. As done for $A_m$, for each $n_m<j<n_s'$, we construct a $\delta-$pseudo orbit $\gamma_j'=\{a_{o}^{j},\ldots,a_{s}^{j}\}$ such that $a_{i}^{j}\in A_i$ for $i\leq s$ and $A_s=\{a_s^j:j\leq n_s'\}$.

Since $f_{s}(a_s^j)\in \overline{f_{s}}(A_s)$ and $d_H(\overline{f_{s+1}}(A_s),(A_{s+1}))<\delta$, there is an $a_{s+1}^j\in A_{s+1}$ such that $d(f_{s+1}(a_s^j),(a_{s+1}^j))<\delta$. Similarly, for each j, $n_m<j<n_s'$ and for each i, $s<i<m$, $a_{i}^{j}\in A_i$ such that $d(f_{i+1}(a_i^j),(a_{i+1}^j))<\delta$, so we can extend $\gamma_j'$ to a $\delta$-pseudo $\gamma_j$ which starts in $A_0$ and ends in $A_m$. Repeating this, it is clear that we can construct a family $\{\gamma_j:j\leq n\}$ of $\delta$-pseudo orbits in $X$. Since $f$ has shadowing property, for each $\gamma_j$, there exists a point $b_j\in X$ which $\epsilon-$shadows $\gamma_j$. Note that $B=\{b_0,b_1,\ldots,b_k\}$ and $B$ $\epsilon$- shadows $\gamma$. Therefore, $(\mathcal{F}(X),\overline{f_{1,\infty}})$ has finite-shadowing property. 
\end{proof}

\begin{Cor}
Let $(X,f_{1,\infty})$ be a dynamical system, then $(X,f_{1,\infty})$ has shadowing property if and only if $(\mathcal{K}(X),\overline{f_{1,\infty}})$ has shadowing property.
\end{Cor}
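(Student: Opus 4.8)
The plan is to prove the two implications separately, assembling the results already established in this section. One direction is immediate: if $(\mathcal{K}(X),\overline{f_{1,\infty}})$ has the shadowing property, then $(X,f_{1,\infty})$ has the shadowing property by Theorem 4.1. So the entire content lies in the converse, and here I would simply chain together the three remaining results of the section.

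Suppose then that $(X,f_{1,\infty})$ has the shadowing property. First I would invoke Theorem 4.2 to conclude that $(\mathcal{F}(X),\overline{f_{1,\infty}})$ has the finite-shadowing property. Next I would apply Lemma 4.1 with the ambient space taken to be $\mathcal{K}(X)$ and the dense subset taken to be $Y=\mathcal{F}(X)$: the preliminaries record that $\mathcal{F}(X)$ is dense in $\mathcal{K}(X)$ under the Vietoris topology (equivalently, since $X$ is compact, under the Hausdorff metric $d_H$), and $\mathcal{F}(X)$ is $\overline{f_n}$-invariant for every $n$ because the image of a finite set under the continuous map $f_n$ is again finite, so $\overline{f_n}(\mathcal{F}(X))\subseteq \mathcal{F}(X)$. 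Lemma 4.1 then upgrades the finite-shadowing property from the dense invariant subsystem $(\mathcal{F}(X),\overline{f_{1,\infty}})$ to the whole system $(\mathcal{K}(X),\overline{f_{1,\infty}})$. Finally, since $X$ is compact, $\mathcal{K}(X)$ is compact in the Hausdorff metric, so Lemma 4.2 applies and converts finite-shadowing into full shadowing for $(\mathcal{K}(X),\overline{f_{1,\infty}})$. Combining this with the direction supplied by Theorem 4.1 yields the claimed equivalence.

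The main thing to check is not any new estimate but that the hypotheses of Lemma 4.1 genuinely transfer to the hyperspace setting: one must verify that $\mathcal{F}(X)$ is both dense in $\mathcal{K}(X)$ (already recorded in Section 2) and $\overline{f_n}$-invariant, and that $\mathcal{K}(X)$ is compact so that the subsequence-and-limit argument underlying Lemma 4.2 remains valid on the hyperspace. All three are standard facts about hyperspaces of compact metric spaces, so once they are in place the converse is just the short composition Theorem 4.2 $\Rightarrow$ Lemma 4.1 $\Rightarrow$ Lemma 4.2, with Theorem 4.1 furnishing the reverse implication and completing the biconditional.
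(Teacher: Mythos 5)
Your proposal is correct and follows exactly the paper's route: Theorem 4.1 gives the reverse implication, and the forward implication is the chain Theorem 4.2 $\Rightarrow$ Lemma 4.1 (with $Y=\mathcal{F}(X)$ dense and $\overline{f_n}$-invariant in $\mathcal{K}(X)$) $\Rightarrow$ Lemma 4.2 (using compactness of $\mathcal{K}(X)$). The paper's own proof is just the one-line citation of these results, so your version, which additionally verifies the hypotheses of the lemmas in the hyperspace setting, is if anything slightly more careful.
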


\begin{proof}
The proof follows from Lemma 4.1, Lemma 4.2 and Theorem 4.2.
\end{proof}

\section*{Acknowledgement}
The first author is funded by GOVERNMENT OF INDIA, MINISTRY OF SCIENCE and TECHNOLOGY No: DST/INSPIRE Fellowship/[IF160750].

\end{document}